\newtheorem{thm}{Theorem.}[section]
\newtheorem{lem}[thm]{Lemma.}
\newtheorem{prop}[thm]{Proposition.}
\newtheorem{defn}[thm]{Definition.}
\newtheorem{rem}[thm]{Remark.}
\renewcommand{\em}{\sl}
\newcommand{\Endproof}{\hspace*{\fill} $\Box$ \vspace{1ex} \noindent }
\renewcommand{\subsection}{\@startsection{subsection}{2}%
{\z@}{-3.25ex plus -1ex minus-.2ex}{-1em}{\bf}} \makeatother
\newcommand{\PP}{\mathbb{P}}
\newcommand{\ZZ}{\mathbb{Z}}
\newcommand{\CC}{\mathbb{C}}
\newcommand{\QQ}{\mathbb{Q}}
\newcommand{\NN}{\mathbb{N}}
\newcommand{\FF}{\mathbb{F}}
\renewcommand{\AA}{\mathbb{A}}
\newcommand{\GG}{\mathbb{G}}
\newcommand{\F}{\mathcal{F}}
\newcommand{\OO}{\mathcal{O}}
\newcommand{\cl}{{\rm cl}}
\newcommand{\C}{{\mathcal C}}
\newcommand{\Irr}{{\rm Irr}}
\newcommand{\si}{{\sigma}}
\newcommand{\MT}{{\rm MT}}
\newcommand{\x}{{\bf x}}
\newcommand{\g}{{\bf g}}
\newcommand{\La}{{\Lambda}}
\newcommand{\GL}{{\rm GL}}
\newcommand{\SL}{{\rm SL}}
\newcommand{\MC}{{\rm MC}}
\newcommand{\LS}{{\rm LS}}
\newcommand{\im}{{\rm im}}
\newcommand{\sym}{{\rm sym}}
\newcommand{\A}{{\mathcal A}}
\newcommand{\SO}{{{\rm SO}}}
\newcommand{\rk}{{\rm rk}}
\newcommand{\J}{{\rm J}}
\newcommand{\Jordan}{{\bf J}}
\renewcommand{\L}{\mathcal{L}}
\newcommand{\n}{{\bf n}}
\newcommand{\ten}{{\otimes}}
\newcommand{\SP}{{\rm SP}}
\newcommand{\Sym}{{\rm Sym}}
\newcommand{\Wr}{{\rm Wr}}
\numberwithin{equation}{section}
\numberwithin{table}{section}
\numberwithin{thm}{section}
\theoremstyle{plain}
\begin{document}

\title{The classification of orthogonally rigid $G_2$-local systems and
  related differential operators}
\author{Michael Dettweiler}
\address[M.~Dettweiler]{
Department of Mathematics\\
University of Bayreuth\\
95440 Bayreuth\\
Germany}
\email{michael.dettweiler@uni-bayreuth.de}

\author{Stefan Reiter}
\address[S.~Reiter]{
Department of Mathematics\\
University of Bayreuth\\
95440 Bayreuth\\
Germany}
\email{stefan.reiter@uni-bayreuth.de}

\begin{abstract} We classify orthogonally rigid local systems of rank $7$ on the punctured 
projective line
whose monodromy 
is dense in the exceptional algebraic group $G_2.$ We obtain differential 
operators corresponding to these local systems under Riemann-Hilbert correspondence.
 \end{abstract}

\keywords{ordinary differential equation, exceptional algebraic group, local system, middle convolution} 
\subjclass[2010]{32S40, 20G41}

\maketitle

\section{Introduction} It is well known that the exceptional simple algebraic group $G_2$
can be seen as a subgroup of $\GL_7$ and stabilizes 
the bilinear form $$ x_0^2+x_1y_1+x_2y_2+x_3y_3,$$
where $x_0,x_1,y_1,x_2,y_2,x_3,y_3$ is a suitably chosen basis of the 
underlying $7$-dimen\-sional vector space, cf.~\cite{Asch}. 
It is the aim of this article to classify the 
orthogonally rigid local systems $\L$ of rank~$7$ whose monodromy group is Zariski 
dense in $G_2(\CC)$ and hence leaves the above form invariant.  
{\it Orthogonal rigidity} for an irreducible orthogonally self-dual
complex local system $\L$ on $\PP^1\setminus \{x_1,\ldots,x_{r+1}\}$ of rank $n$ means 
that the following {\rm dimension formula} holds:
\begin{equation}\label{orthrigid}  \sum_{i=1}^{r+1} {\rm codim}(C_{O_n}(g_i))=2\dim(O_n),
\end{equation}
where $C_{O_n}(g_i)$ denotes the 
centralizer of the local monodromy generator $g_i$ in the orthogonal
group $O_n.$ 
The dimension formula \eqref{orthrigid} is equivalent to  the vanishing 
of the parabolic cohomology of $\pi_1(\PP^1\setminus \{x_1,\ldots,x_{r+1}\})$
with values in the Lie algebra of $O_n$ (acting adjointly via the monodromy representation
of $\L$) and
is hence closely related to the dimension of the tangent space of the component
of the space of representations of $\pi_1(\PP^1\setminus \{x_1,\ldots,x_{r+1}\})$
with given local monodromies, cf.~\cite{Weil}. 
The dimension formula is also a necessary condition for the condition that there exist 
 only finitely many equivalence classes of irreducible orthogonally self dual 
local systems $\L$ 
with given local monodromies \cite{SV}. Hence, for such local systems, the
notion of orthogonal rigidity is weaker as the notion of (physical) rigidity 
used in \cite{Katz96} (which can be seen as rigidity relative 
to the larger group $\GL_n$)  but still strong enough to impose a lot of structure  on~$\L.$

By the work of N. Katz on the middle convolution functor
$\MC_\chi,$ all 
rigid irreducible  local systems $\L$ on the punctured line  can be constructed by applying 
iteratively  $\MC_\chi$ and tensor products with rank-$1$-sheaves to a rank-$1$-sheaf.
For orthogonally rigid local systems with $G_2$-monodromy we prove that there
is a similar method of construction: Each such local system can be constructed using 
$\MC_\chi,$ tensor products with rank-$1$-sheaves {\it and}  exceptional 
isomorphisms between small algebraic groups 
which each have a natural interpretation as tensor operations
like  alternating or symmetric  products 
(e.g. $\SO_5=\Lambda^2(\SP_4)$). In some cases, also rational pullbacks are involved.
Our main result is as follows (Thm.~\ref{mainthm}): 

\begin{thm}\label{1.1} Let $\L$ be an orthogonally rigid local system on a punctured projective 
line $\PP^1\setminus 
\{x_1,\ldots,x_{r}\}$  of rank $7$ whose monodromy group
is dense in the exceptional simple group $G_2.$ If $\L$ has nontrivial local 
monodromy at $x_1,\ldots,x_{r},$ then $r=3,4$ and 
$\L$ can be constructed by applying iteratively a 
 sequence of the following operations to a rank-$1$-system:
 \begin{itemize}
 \item Middle convolutions $\MC_\chi,$ with varying $\chi.$ 
 \item Tensor products with rank-$1$-local systems.
 \item Tensor operations like symmetric or alternating products.
 \item Pullbacks along rational functions.
 \end{itemize}
 Especially, each such local system which has quasi-unipotent monodromy 
 is motivic, i.e.,
 arises from the variation of periods of a family of varieties over the punctured
 projective line. 
\end{thm}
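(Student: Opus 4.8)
The plan is to establish the theorem by a finite case analysis organized along the possible local monodromy datatypes, reducing in each case to a known rigid (in the $\GL$-sense) local system via the exceptional isomorphisms, and then invoking Katz's algorithm together with its motivic refinement. The first step is to enumerate, up to the obvious symmetries (permutation of the punctures, tensoring by rank-$1$ systems, and scalar shifts of the local monodromies), all tuples of conjugacy classes $(C_1,\dots,C_r)$ in $G_2(\CC)$ that can occur as local monodromy of an irreducible $G_2$-local system and satisfy the orthogonal rigidity identity \eqref{orthrigid}. Because $G_2$ has rank $2$, the semisimple parts of the $C_i$ lie in a $2$-dimensional torus and the unipotent parts range over the finitely many unipotent classes of $G_2$; combining this with the numerical constraint \eqref{orthrigid} (computed inside $O_7$ via the $7$-dimensional representation) forces $r\in\{3,4\}$ and leaves only a short explicit list. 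This is essentially bookkeeping with the $G_2$ root system and its unipotent classes, so I would not grind through it here.

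**Next**, for each surviving tuple I would identify the local system explicitly. The key structural input is that a rank-$7$ orthogonally self-dual local system with $G_2$-monodromy, after a suitable tensor twist and middle convolution, lands in one of the small-rank situations where an exceptional isomorphism applies: $\SO_5\cong\Lambda^2\Sp_4$, $\SO_6\cong\SL_4$ (via $\Lambda^2$), $\Sp_4$ versus $\SO_5$, or the embedding $\Sp_6\hookrightarrow\SL_7$ and its contraction onto $G_2$ through the $\Lambda^3$-construction (together, when needed, with a rational pullback to account for branching). Tracking the local monodromies through these operations — each of $\MC_\chi$, tensoring, and pullback has an explicit effect on conjugacy classes by Katz's theory — shows that every case reduces to a hypergeometric-type or otherwise classically rigid local system, which is the base of an iterated $\MC_\chi$-construction. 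Assembling the reductions in reverse gives the claimed construction sequence, proving the first assertion of the theorem.

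**For the final sentence**, once $\L$ is written as an iterated application of $\MC_\chi$, rank-$1$ twists, tensor operations, and rational pullbacks starting from a rank-$1$ local system, I would argue motivicity operation by operation. A quasi-unipotent rank-$1$ local system on $\PP^1$ minus points is a Kummer-type sheaf $\LL_\chi$ with $\chi$ of finite order, hence motivic (it arises from the cohomology of the curve $y^N=f(x)$). Middle convolution $\MC_\chi$ with $\chi$ of finite order is geometrically the operation of taking relative middle cohomology of a family of such curves fibered over the punctured line (this is Katz's realization of $\MC_\chi$ as a variant of the Radon/Fourier transform on $\AA^1$), so it preserves the class of motivic local systems; tensor product corresponds to fiber product of the underlying families and the Künneth formula; a rational pullback $f^*$ corresponds to base change of the family along $f$; and the exceptional tensor operations $\Lambda^2,\Sym^2,\Lambda^3,\dots$ are realized on motives by the corresponding Schur functors applied to the relative cohomology, which remain motivic (e.g. as direct summands cut out by correspondences in a fiber power). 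Since quasi-unipotence is preserved at each step — local monodromies stay quasi-unipotent under all four operations — the hypothesis that $\L$ has quasi-unipotent monodromy guarantees that the whole construction can be carried out with finite-order characters throughout, so $\L$ is a subquotient of the variation of cohomology of an explicit family of varieties over $\PP^1\setminus\{x_1,\dots,x_r\}$, i.e.\ $\L$ is motivic.

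**The main obstacle** I anticipate is not any single step but the combination of two technical points: first, the case enumeration must be carried out with genuine care, since \eqref{orthrigid} is a numerical condition that can be satisfied by tuples which nevertheless do \emph{not} support an irreducible $G_2$-local system (one must separately rule these out using the structure of $G_2$ inside $O_7$ and irreducibility/density constraints), and missing or double-counting a case would break the classification. Second, making the motivic claim rigorous requires keeping precise track of which Tate twists and which intermediate-extension/middle-cohomology conventions are used so that the Schur-functor and pullback constructions genuinely produce \emph{pure} motivic sheaves rather than merely objects in a derived category; the bookkeeping of weights through iterated $\MC_\chi$ is where the argument is most delicate. Both difficulties are, however, finite and explicit, which is exactly why the theorem is a \emph{classification} result rather than an abstract existence statement.
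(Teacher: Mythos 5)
Your outline reproduces the paper's general strategy for two of the three essential components: the enumeration of admissible local data via the centralizer-dimension identity together with Scott's inequality, and the reduction of a given system by middle convolutions and rank-one twists, followed by the exceptional isomorphisms $\SO_6=\La^2\SL_4$, $\SO_5=\La^2\SP_4$, $\SO_4=\SL_2\otimes\SL_2$, $\SO_3=\Sym^2\SL_2$ (your appeal to an $\Sp_6\hookrightarrow\SL_7$ ``$\La^3$-contraction onto $G_2$'' plays no role and is not a correct mechanism here). The motivic paragraph is also essentially the standard argument and is fine.

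The genuine gap is your assumption that \emph{every} admissible case reduces, by these operations, to a classically (linearly) rigid system that is then unique and Katz-constructible. Orthogonal rigidity is strictly weaker than physical rigidity, so a fixed tuple of local Jordan forms can support several inequivalent $G_2$-systems (the paper's cases carry multiplicities $2$ and $4$), and in the case $P_2(19,19,7)$ --- two points with local monodromy $(\Jordan(2),\Jordan(2),\Jordan(3))$ --- no convolution/twist reduction to lower rank is available at all. The paper handles this case by showing that every such triple is a quadratic pullback of one of the linearly rigid $G_2$-triples classified in \cite{DR07}, and the exhaustiveness of that description is \emph{not} a formal consequence of the local data: it is proved by reducing monodromy tuples modulo $\ell$ and computing normalized structure constants in $G_2(q)$ from the generic character table (CHEVIE), via Theorem~\ref{nC}, combined with a braiding argument ruling out non-pullback triples. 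Without this counting input (or a substitute), your step ``assembling the reductions in reverse gives the claimed construction sequence'' does not cover all systems claimed by the theorem. Relatedly, for the existence side of the classification the paper must verify that the constructed systems really have monodromy dense in $G_2$ rather than merely in $\SO_7$; as the introduction stresses, this cannot be read off from local monodromies, and it is settled there by exhibiting the associated differential operators (via Hadamard products/middle convolution on operators) and checking that their second exterior square has order $14$ --- another ingredient your proposal would need if it is to yield the full statement rather than only the reduction heuristic.
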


A list of the occurring cases together with the local monodromies 
is given in Thm.~\ref{mainthm}.
Rigid local systems on the punctured line with 
$G_2$-monodromy were classified in \cite{DR07}. Since orthogonal 
rigidity for irreducible orthogonal  local systems with $G_2$-monodromy is 
a weaker condition as the usual rigidity condition 
our classification  contains the rigid local systems from \cite{DR07} as special cases.
We remark that the verification that the monodromy group is inside 
the group $G_2$ cannot be decided looking at the local monodromies alone.
To prove this, we make use of recent results of Bogner and Reiter in \cite{ReiterBogner} on the interpretation of 
$\MC_\chi$ at the level of differential operators, related to the 
Hadamard product. Miraculously, the differential operators 
which belong to the local systems of Thm.~\ref{1.1} under Riemann-Hilbert 
correspondence can easily be determined and it can be proven in each case that they
have the property that the second alternating
square has rank $14.$ This implies that the second alternating square of $\L$ 
decomposes into a rank-$14$-factor and a rank-$7$-factor and hence that the monodromy is
contained in the group~$G_2.$ \\

Motivated by the results of Thm.~\ref{1.1} one may ask the question, whether any 
irreducible orthogonally rigid local system can be obtained by a sequence 
of tensor operations, middle convolutions $\MC_\chi,$ and rational pullbacks 
applied to 
a local system of rank one.

\section{Preliminaries on convolution operations}

Recall the construction of the middle convolution from
\cite{Katz96}:  Consider the addition map 
$$ \pi:\AA^1\times \AA^1\to \AA^1,(x,y)\mapsto x+y.$$
Let $\L$ be a complex  valued local system on $\AA^1\setminus \{x_1,\ldots, x_r\}$ 
and let $L=j_*\L[1],$ viewed as perverse sheaf on $\AA^1$   ($j$ denoting the inclusion of 
$\AA^1\setminus \{x_1,\ldots,x_r\}$ into $\AA^1$).  
Let  
 $\L_\chi$ be a local system on $\GG_m,$ defined by a 
nontrivial character $\chi:\pi_1(\GG_m)\to \CC^\times.$ We call $\L_\chi$ a {\it Kummer sheaf}. 
Let 
$L_\chi=(k_*\L_\chi)[1],$ where $k$ denotes the natural inclusion 
of $\GG_m$ to $\AA^1.$ Sometimes we need the following 
variant: using the isomorphism $\AA^1\setminus \{y\}\to 
\GG_m, x\mapsto x-y,$ we can view $\L_\chi$ as local system on $\AA^1\setminus \{y\}.$
This local system is then denoted $\L_{\chi(x-y)}.$ 

 Following Katz \cite{Katz96},
one can define the {\it middle convolution of $\L$ with the Kummer sheaf $\L_\chi$} as
\begin{equation}\label{eqmcchi}  \MC_\chi(\L):=\left(\im(R\pi_!(L\boxtimes L_\chi)\to R\pi_*(L\boxtimes L_\chi)\right)[-1]|_{\AA^1\setminus \{x_1,\ldots,x_r\}}.\end{equation}

\begin{rem}\label{remmc} Since we restrict to $\AA^1\setminus \{x_1,\ldots,x_r\},$  the $0$-th and the $2$-th higher
direct image vanish by the non-triviality of $\L_\chi,$  so \eqref{eqmcchi} is equivalent to 
\begin{equation}\label{eqmcchi2}  \MC_\chi(\L)=\left(\im(R^1\pi_!(j_*\L\boxtimes k_*\L_\chi)\to 
R^1\pi_*(j_*\L\boxtimes k_*L_\chi)\right)|_{\AA^1\setminus \{x_1,\ldots,x_r\}}.\end{equation}
Hence, the middle convoluted local system
$\MC_\chi(\L)$ can be seen as variation of the parabolic cohomology groups 
$H^1(\PP^1, i_*(\L\otimes \L_{\chi(x-y)}))$
over $\AA^1_y\setminus \{x_1,\ldots,x_r\},$
where $i$ is the inclusion 
of $\AA^1\setminus \{x_1,\ldots,x_r,y\}$ into $\PP^1$ and the local systems  
$\L$ and $\L_{\chi(x-y)}$ are viewed as local systems on $\AA^1\setminus 
\{x_1,\ldots,x_r,y\}$ via restriction (cf. \cite{Katz96} and \cite{dw03}).  
\end{rem}

In the usual way we fix a set of generators $\gamma_1,\ldots,\gamma_{r+1}$ 
of $\pi_1(\AA^1\setminus \x),$ where $\gamma_i$ ($i=1,\ldots,r$)  is
a  simple loop which moves counterclockwise 
around $x_i,$ where $\gamma_{r+1}$ moves around $\infty,$
such that 
the product relation 
$ \gamma_1\cdots \gamma_{r+1}=1$ holds. 
Hence, every local system on $\AA^1\setminus \x$ gives, via its monodromy representation
$$\rho_\L:\pi_1(\AA^1\setminus \x, x_0)\to \GL(\L_{x_0})\simeq \GL_n(\CC),$$ rise to 
its monodromy tuple $(A_1,\ldots,A_{r+1}),$
where $A_i=\rho_\L(\gamma_i).$ 
The following result is a consequence of the numerology of the middle convolution
(cf.~\cite[Cor. 3.3.6]{Katz96}):

\begin{lem} \label{lemmonodromy1} Let $\L$ be an irreducible 
local system with 
 monodromy tuple 
$\A=(A_1,\dots,A_{r+1}) \in \GL(V)^{r+1}$, s.t.
at least two $A_i, A_j, 1\leq i<j\leq r$ are non trivial.
  Let $\chi:\pi_1(\GG_m)\to \CC^\times$ 
be the character which sends a counterclockwise generator of $\pi_1(\GG_m)$ 
to $\lambda\in \CC^\times \setminus \{1\}.$ Let $(\tilde{B}_1,\dots,\tilde{B}_{r+1})$
be the monodromy tuple of $\MC_\chi(\L).$ Then the following hold:
\begin{enumerate}

\item The rank $m$ of $\MC_\chi(\L)$ is
      \[ m=\sum_{i=1}^r \rk(A_i-1)+\rk(\lambda^{-1} A_{r+1}-1)-\rk(\L).\]

\item Every  Jordan
block $\J(\alpha,l)$ occurring in the Jordan
decomposition of $A_i$ contributes a Jordan block $\J(\alpha
\lambda,l')$ to the Jordan decomposition of $\tilde{B}_i,$ where
$$ l':\;=\quad
  \begin{cases}
    \quad l, &
                              \quad\text{\rm if $\alpha \not= 1,\lambda^{-1}
$,} \\
    \quad  l-1& \quad \text{\rm if $\alpha =1$,} \\
    \quad l+1, & \quad \text{\rm if $\alpha =\lambda^{-1}$.}
  \end{cases}
  $$
  The only other Jordan blocks which occur in the Jordan
  decomposition of $\
  \tilde{B}_i$ are blocks of the form $\J(1,1).$

\item
Every  Jordan block $\J(\alpha^{-1},l)$ occurring in the Jordan
decomposition of  $A_{r+1}$ contributes a Jordan block $\J(\alpha^{-1}
\lambda^{-1},l')$ to the Jordan decomposition of $\tilde{B}_{r+1},$
where
$$ l':\;=\quad
  \begin{cases}
    \quad l, &
                              \quad\text{\rm if $\alpha \not= 1,\lambda^{-1}
$,} \\
    \quad  l+1& \quad \text{\rm if $\alpha =1$,} \\
    \quad l-1, & \quad \text{\rm if $\alpha =\lambda^{-1}$.}
  \end{cases}
  $$
  The only other Jordan blocks which occur in the Jordan
  decomposition of $\tilde{B}_{r+1}$ are blocks of the form $\J(\lambda^{-1},1).$
\end{enumerate}\end{lem}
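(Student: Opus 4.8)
The plan is to reduce the statement to the explicit finite-dimensional model of the middle convolution. First I would recall (Katz \cite{Katz96}, and the matrix reformulation used e.g.\ in \cite{dw03}) that, writing $V=\L_{x_0}\cong\CC^n$ and $A_{r+1}=(A_1\cdots A_r)^{-1}$, the local system $\MC_\chi(\L)$ is realised as the quotient $W:=V^r/(\K+\LL)$ of an $nr$-dimensional space: one builds block matrices $B_1,\dots,B_r\in\GL_{nr}(\CC)$ whose only non-identity part sits in the $k$-th block row and is assembled from $A_1,\dots,A_r$ and $\lambda$, one sets $B_{r+1}:=(B_1\cdots B_r)^{-1}$, and one singles out two $\langle B_1,\dots,B_{r+1}\rangle$-stable subspaces of $V^r$ on which every $B_i$ acts by a scalar: the subspace $\K=\bigoplus_{k=1}^r\ker(A_k-1)$, with $\ker(A_k-1)$ placed in the $k$-th slot (every $B_i$ acting trivially there), and a further subspace $\LL$ attached to $\ker(\lambda A_1\cdots A_r-1)$ along the diagonal, of dimension $\dim\ker(\lambda A_1\cdots A_r-1)$. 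Then $(\tilde B_1,\dots,\tilde B_{r+1})$ is, up to conjugacy, the tuple induced by $(B_1,\dots,B_{r+1})$ on $W$; irreducibility of $\L$ together with the hypothesis that at least two $A_i$ are nontrivial guarantees $\K\cap\LL=0$ and that $W$ realises the middle — not merely the naive — convolution.

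Granting this, part (1) is a dimension count: $\dim W=nr-\dim\K-\dim\LL$, where $\dim\K=\sum_{i=1}^r\bigl(n-\rk(A_i-1)\bigr)$ and, using $A_1\cdots A_r=A_{r+1}^{-1}$, $\dim\LL=\dim\ker(\lambda A_1\cdots A_r-1)=\dim\ker(A_{r+1}-\lambda)=n-\rk(\lambda^{-1}A_{r+1}-1)$; substituting and using $\rk(\L)=n$ gives the asserted formula for $m=\rk(\MC_\chi(\L))$.

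For parts (2) and (3) I would compute the Jordan form of $B_k$ first on all of $V^r$ and then on the quotient $W$. Because $B_k-1$ is supported in the $k$-th block row and its image lies in the $k$-th slot, $B_k-\mu$ becomes block-triangular once the $k$-th slot is moved last, with diagonal blocks $(1-\mu)I_n$ (occurring $r-1$ times) and $\lambda A_k-\mu$; hence for $\mu\neq1$ the $\mu$-generalised eigenspace of $B_k$ on $V^r$ maps isomorphically onto that of $\lambda A_k$. Thus a Jordan block $\J(\alpha,l)$ of $A_k$ with $\alpha\neq\lambda^{-1}$ produces a block $\J(\alpha\lambda,l)$ of $B_k$, and — since one checks that $\K$ and $\LL$ meet these eigenspaces trivially — it survives unchanged in $W$; this is the case $\alpha\neq1,\lambda^{-1}$ of part (2). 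The whole content is therefore localised at the two eigenvalues $\mu=1$ and $\mu=\lambda$ of $B_k$, i.e.\ at the blocks of $A_k$ with $\alpha=\lambda^{-1}$ and $\alpha=1$: working in a Jordan basis of $A_k$, one sees that at eigenvalue $1$ the nilpotent chains of $B_k$ on $V^r$ are one step longer than the corresponding chains of $A_k$ at $\lambda^{-1}$ (a vector lying outside $\LL$ gets prepended), so that passing to $W$ leaves a chain of length $l+1$ together with leftover blocks $\J(1,1)$ — the rule for $\alpha=\lambda^{-1}$ — while at eigenvalue $\lambda$ the $k$-th-slot part of $\K$, which is exactly the socle coming from $\ker(A_k-1)$, shortens each such chain by one in $W$ — the rule for $\alpha=1$, again with leftover $\J(1,1)$'s. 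For $i=r+1$ the same analysis applies to $B_{r+1}=(B_1\cdots B_r)^{-1}$: the $\pm1$ shifts at eigenvalues $1$ and $\lambda$ are interchanged relative to the finite case (because $B_{r+1}$ is the inverse of the product and $\infty$ is dual to the finite punctures), and the leftover blocks become $\J(\lambda^{-1},1)$; the parametrisation by $\alpha^{-1}$ in (3) merely records that $A_{r+1}$ carries the eigenvalue $\alpha^{-1}$ where $A_i$ carries $\alpha$.

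The only real obstacle is this last step: the exact bookkeeping of how $\K$ and $\LL$ cut the Jordan chains of $B_k$ (resp.\ $B_{r+1}$) at the eigenvalues $1$ and $\lambda$, i.e.\ justifying the shifts $l\mapsto l\pm1$; everything else is formal. Since the contributions at eigenvalues $1$ and $\lambda$ decouple, this amounts to two small explicit computations in a Jordan basis. Alternatively — and this is the route I would actually present, with the basis computation as a self-contained backup — one simply invokes \cite[Cor.~3.3.6]{Katz96} and the local monodromy computations of that reference, of which the present lemma is a direct translation.
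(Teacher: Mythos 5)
Your proposal is correct and, in the route you say you would actually present, coincides with the paper's treatment: the paper gives no independent proof of this lemma but simply records it as a consequence of the numerology of the middle convolution, citing \cite[Cor.~3.3.6]{Katz96}, exactly as you do. Your supplementary sketch via the explicit $nr\times nr$ matrix model is sound in substance as well; the only slip is the parenthetical claim that every $B_i$ acts trivially on $\K$ — in fact $B_k$ acts by the scalar $\lambda$ on the $k$-th slot $\ker(A_k-1)$, which is precisely what your later chain-shortening argument at the eigenvalue $\lambda$ relies on.
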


By the Riemann-Hilbert correspondence, each local system $\L\in \LS(\AA^1\setminus \x)$
corresponds to an ordinary differential equation (or, equivalently, an 
operator $L$ in the Weyl algebra $\CC[x,\vartheta=x\frac{d}{dx}]$) with regular singularities. 
Let us first describe the tensor operations in the Weyl algebra needed below
(cf.~\cite[Chapter 2]{Put} and \cite{ReiterBogner}).

\begin{defn}
\begin{enumerate}
 \item Let $M_1$, $M_2$ be two differential $\mathbb{C}(x)$-modules. 
The tensor product $\left(M_1\otimes_{\mathbb{C}(x)} M_2,\partial_{M_1\otimes M_2}\right)$ of $M_1$ and $M_2$ over $\mathbb{C}(x)$ is given by the $\mathbb{C}(x)$-vector space $M_1\otimes_{\mathbb{C}(x)} M_2$ together with the derivation
\[\partial_{M_1\otimes M_2}(m_1\otimes m_2):=\partial_{M_1}(m_1)\otimes m_2+m_1\otimes\partial_{M_2}(m_2).\]

\item Let $L_1,L_2\in \mathbb{C}(x)[\partial], \partial=\frac{d}{dx}, $ be two monic differential operators with corresponding differential modules $\left(\left(M_1,\partial_{M_1}\right),\Omega_1\right)$ and $\left(\left(M_2,\partial_{M_2}\right),\Omega_2\right)$. The tensor product $L_1\otimes L_2\in \mathbb{C}(z)[\partial]$ of $L_1$ and $L_2$ over $\mathbb{C}(x)$ is the minimal monic annihilating operator of $\Omega_1\otimes\Omega_2\in M_1\otimes_{\mathbb{C}(x)} M_2$.
\end{enumerate}
\end{defn}

\begin{rem}
\begin{enumerate}

\item By \cite[Corollary 2.19]{Put}, the solution space of $L_1\otimes L_2$ in the Picard-Vessiot field $K\supset \mathbb{C}(x)$ of the operator is spanned by the set \[\{y_1y_2\mid  L_1(y_1)=L_2(y_2)=0\}.\] In particular, $L_1\otimes L_2$ is the monic operator of minimal order, whose solution space is spanned by this set.
\item Symmetric and exterior powers of differential modules and differential operators are defined similarly. If $L\in \mathbb{C}(x)[\partial]$ is monic, by \cite[Corollary 2.23]{Put} and \cite[Corollary 2.28]{Put} $\Sym^2(L)$ is the monic operator of minimal degree whose solution space is spanned by the set
\[\{y_1 y_2\mid L(y_i)=0 \textrm{ for  } i=1,2\}\] and $\Lambda^2(L)$ the monic operator of minimal degree whose solution space
 is spanned by the set of Wronskians
\[\{\Wr(y_1,y_2):=\det\begin{pmatrix}y_1&y_2\\\partial y_1&
\partial y_2
\end{pmatrix}\mid  L(y_i)=0 \textrm{ for } i=1,2\}.\]
\end{enumerate}
\end{rem}

Let $L\in\mathbb{C}[x,\vartheta]$ be Fuchsian, i.e. $L$ has only regular singularities, and smooth on $\AA^1\setminus \{x_1,\ldots,x_r\}$,
%non singular outside $\{x_1,\ldots,x_r\},$ 
let  $f$ be a solution of $L,$  
viewed as section of the local  system $\L$ of solutions of $L,$ and let 
 $a\in\QQ\setminus\ZZ$.
 For two simple loops $\gamma_p, \gamma_q,$ based at $x_0\in \AA^1\setminus \{x_1,\ldots,x_r\},$ and   
 moving counterclockwise around $p,$ resp. $q,$ we define the \textit{Pochhammer contour} \[[\gamma_p,\gamma_q]:=\gamma_p^{-1}\gamma_q^{-1}\gamma_p\gamma_q.\]
 For $y\in \AA^1\setminus \{x_1,\ldots,x_r\},$ the integral 
   \begin{equation}\label{int} C^{p}_a(f)(y):=\int_{[\gamma_{p},\gamma_{y}]}f(x)(y-x)^a\frac{dx}{y-x}
   \end{equation} is called the 
   \textit{convolution} of $f$ and $x^a$ with respect to the Pochhammer contour $[\gamma_{p},\gamma_{y}].$ 
   
   \begin{rem}\label{remmcchi2} If $x^a$ is a local section of   the Kummer sheaf $\L_\chi,$ then 
   the integral $ \int_{[\gamma_{p},\gamma_{y}]}f(x)(y-x)^a\frac{dx}{y-x}$ represents
    an element in $H^1(\AA^1\setminus \{x_1,\ldots,x_r,y\},\L\otimes \L_{\chi(x-y)})$
   in the usual way, cf.~\cite{BlochEsnault} (where we view 
   $\L$ and $\L_\chi$ as local systems on $\AA^1\setminus \{x_1,\ldots,x_r,y\}$ by restriction). 
   Under certain conditions (made explicit in 
   ~\cite{DRFuchsian}),   the analytic continuation 
   of the integral \eqref{int} near the singularities  is in the image 
   of the local monodromy and therefore
 contained in the parabolic cohomology group $H^1(\PP^1,k_*(\L\otimes\L_{\chi(x-y)}))\leq 
 H^1(\AA^1\setminus \{x_1,\ldots,x_r,y\},\L\otimes \L_{\chi(x-y)}),$
 cf.~\cite{dw03}.  By Remark~\ref{remmc}, for varying
$y,$   the integral $C^{p}_a(f)(y)$ can hence be viewed as a section 
of $\MC_\chi(\L).$ 
\end{rem}

  In a similar way as for $C^{p}_a(f)(y),$ define 
\[H^{p}_{a}(f)(y):=\int_{[\gamma_{p},\gamma_{y}]}f(x)\left(1-\frac{y}{x}\right)^{-a}\frac{dx}{x}.\] 
The integral $H^p_a(f)$ 
is called the \textit{Hadamard product} of $f$ and $(1-x)^{-a}$ with respect to the Pochhammer contour $[\gamma_{p},\gamma_{y}]$.
We have the obvious relations
\[C^{p}_a(f)=(-1)^{a-1}H^{p}_{1-a}(x^af),\quad H^{p}_{a}(f)=(-1)^{-a}C^{p}_{1-a}\left(x^{a-1}f\right).\]

In \cite{ReiterBogner}, the following is proved: 

\begin{prop}\label{Falt}
Let $L=\sum_{i=0}^mx^iP_i(\vartheta)\in\mathbb{C}[x,\vartheta]$ be Fuchsian, $f$ a solution of $L$ and $a\in\QQ\setminus\ZZ$.
 Then $C^p_a(f)$ is a solution of
\[\mathcal{C}_a(L):=\sum_{i=0}^my^i\prod_{j=0}^{i-1}(\vartheta+i-a-j)\prod_{k=0}^{m-i-1}(\vartheta-k)P_i(\vartheta-a) \in \mathbb{C}[y,\vartheta]\] for each $p\in\mathbb{P}^1$
and
 $H^p_a(f)$ is a solution of \[\mathcal{H}_a(L):=\sum_{i=0}^my^i\prod_{j=0}^{i-1}(\vartheta+a+j)\prod_{k=0}^{m-i-1}(\vartheta-k)P_i(\vartheta) \in \mathbb{C}[y,\vartheta].\]
\end{prop}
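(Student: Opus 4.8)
The plan is to prove Proposition \ref{Falt} by a direct computation with the Mellin/Euler-type integral transform, reducing the statement about solutions of $L$ to a contour-integral identity involving $\vartheta = y\frac{d}{dy}$ acting under the integral sign. First I would set up the key algebraic fact: for the Pochhammer kernel $K_a(x,y) = (y-x)^{a}\frac{1}{y-x} = (y-x)^{a-1}$, one has a transformation rule relating the operator $\vartheta_y$ acting on $\int K_a(x,y) f(x)\,dx$ to the operators $\vartheta_x$ and multiplication by $x$ acting on $f(x)$ inside the integral. Concretely, I would establish by differentiating under the integral sign that $\vartheta_y$ applied to $C^p_a(f)(y)$ corresponds, after an integration by parts justified by the vanishing of the integrand around the closed Pochhammer contour $[\gamma_p,\gamma_y]$, to the substitution $\vartheta \mapsto \vartheta + a$ together with extra linear factors coming from the boundary terms. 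The crucial point is that the Pochhammer contour is chosen precisely so that $f(x)(y-x)^{a-1}$ returns to its initial value after traversing $[\gamma_p,\gamma_y]$ even though $(y-x)^a$ is multivalued, so all integration-by-parts boundary contributions vanish; this is exactly the mechanism already invoked in Remark \ref{remmcchi2}.

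Next I would carry out the bookkeeping monomial by monomial. Writing $L = \sum_{i=0}^m x^i P_i(\vartheta)$, I apply $C^p_a$ to the equation $L(f) = 0$, i.e.\ $\sum_{i=0}^m C^p_a\bigl(x^i P_i(\vartheta) f\bigr) = 0$, and track separately the effect of (a) the operator $P_i(\vartheta)$ acting on $f$, which under the transform becomes $P_i(\vartheta - a)$ — this is the standard shift for the Euler kernel $(y-x)^{a-1}$ — and (b) multiplication by $x^i$, which under the transform of $C_a$ becomes multiplication by $y^i$ times a product of linear factors in $\vartheta$. The two products $\prod_{j=0}^{i-1}(\vartheta + i - a - j)$ and $\prod_{k=0}^{m-i-1}(\vartheta - k)$ arise respectively from (i) the repeated integration by parts / raising of the exponent needed to turn $x^i (y-x)^{a-1}$ into a derivative-friendly form — each step produces one linear factor shifted by $a$ — and (ii) clearing denominators so that the resulting operator lies in $\mathbb{C}[y,\vartheta]$ with the common degree $m$, which forces multiplication of the $i$-th term by the "missing" factors $\prod_{k=0}^{m-i-1}(\vartheta - k)$. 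I would do the $i=0$ and $i=1$ cases explicitly to pin down the indexing and the sign conventions, then state the general pattern; the full induction on $i$ is routine once the two elementary lemmas (shift rule and the $x\mapsto y$-with-linear-factors rule) are in place.

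For the Hadamard product statement I would not repeat the computation but instead deduce $\mathcal{H}_a(L)$ from $\mathcal{C}_a$ using the already-stated relations $C^p_a(f) = (-1)^{a-1} H^p_{1-a}(x^a f)$ and $H^p_a(f) = (-1)^{-a} C^p_{1-a}(x^{a-1} f)$: conjugating $\mathcal{C}_{1-a}$ by the substitution $f \mapsto x^{a-1} f$ (which acts on the Weyl algebra by $\vartheta \mapsto \vartheta + a - 1$ and $x \mapsto x$) and simplifying the resulting products of linear factors should yield exactly $\mathcal{H}_a(L) = \sum_{i=0}^m y^i \prod_{j=0}^{i-1}(\vartheta + a + j)\prod_{k=0}^{m-i-1}(\vartheta - k)P_i(\vartheta)$, with the kernel $(1-y/x)^{-a}$ absorbing the twist. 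Alternatively, and perhaps more cleanly, I would rerun the same integration-by-parts argument directly for the Hadamard kernel $(1-y/x)^{-a}\frac{1}{x}$, whose relevant shift rule is $\vartheta \mapsto \vartheta$ on the $P_i$ factor (no shift, since the kernel depends on $y/x$ in a way compatible with $\vartheta_y$ homogeneity) and whose raising factors come out as $\prod_{j=0}^{i-1}(\vartheta + a + j)$.

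The main obstacle I anticipate is the rigorous justification that the boundary terms from integration by parts vanish and that differentiation under the integral sign is legitimate, uniformly in $y$ away from the singularities $x_1,\dots,x_r$ and $\infty$ — i.e.\ the analytic input underlying Remark \ref{remmcchi2}. The purely formal/algebraic side (matching the two products of linear factors and getting the degrees and shifts exactly right) is a bounded, if fiddly, computation; the genuinely delicate point is that the Pochhammer-contour integral $C^p_a(f)(y)$ is a \emph{well-defined} multivalued holomorphic function of $y$ and that it satisfies the \emph{monic} minimality implicit in calling $\mathcal{C}_a(L)$ "the" operator — but since Proposition \ref{Falt} as stated only asserts that $C^p_a(f)$ \emph{is a solution of} $\mathcal{C}_a(L)$ (not that $\mathcal{C}_a(L)$ is the minimal such operator), it suffices to verify the annihilation identity, so the analytic burden reduces to the vanishing of the closed-contour boundary contributions, which I would cite from \cite{DRFuchsian} and \cite{dw03}.
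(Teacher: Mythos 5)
The paper does not actually prove Proposition~\ref{Falt}: it is imported verbatim from Bogner--Reiter \cite{ReiterBogner} ("In \cite{ReiterBogner}, the following is proved"), so there is no internal argument to compare yours with. Your plan is the standard proof of this operational identity and it is sound. The two lemmas you leave implicit do hold in the form you need: on the Pochhammer contour one has $\partial_y C^p_a(f)=C^p_a(f')$, equivalently $\vartheta_y\circ C^p_a=C^p_a\circ(\vartheta_x+a)$, which is exactly the shift $P_i(\vartheta)\mapsto P_i(\vartheta-a)$; and multiplication by $x$ passes to multiplication by $y$ only up to a correction of the form $(\vartheta-a)\vartheta^{-1}$, so iterating for $x^i$ and left-multiplying by $\prod_{k=0}^{m-i-1}(\vartheta-k)$ to clear the non-polynomial factors yields precisely the two products in $\mathcal{C}_a(L)$. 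A quick way to settle the bookkeeping you call "routine" is a coefficientwise check: if $\sum_i c_{N-i}P_i(N-i)=0$ for all $N$ (i.e.\ $L(\sum c_nx^n)=0$), then applying $\mathcal{C}_a(L)$ to $\sum_n c_n\,n!\,\Gamma(a)\Gamma(n+a+1)^{-1}y^{n+a}$ makes the two products telescope to $\Gamma(a)N!/\Gamma(N+a-m+1)$ times the vanishing sum, and the analogous computation with $(a)_N/(N-m)!$ verifies $\mathcal{H}_a(L)$; the contour-integral statement then follows by your integration-by-parts argument, whose analytic ingredients (differentiation under the integral, vanishing of boundary terms because the integrand is single-valued along $[\gamma_p,\gamma_y]$) are correctly delegated to \cite{DRFuchsian} and \cite{dw03}, and only annihilation, not minimality, is claimed. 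Your reduction of the Hadamard case also checks out: conjugating by $x^{a-1}$ replaces $P_i(\vartheta)$ by $P_i(\vartheta+1-a)$, and in $\mathcal{C}_{1-a}$ the product $\prod_{j=0}^{i-1}(\vartheta+i-(1-a)-j)$ is literally $\prod_{j=0}^{i-1}(\vartheta+a+j)$, giving $\mathcal{H}_a(L)$ exactly. One small correction of wording: the extra linear factors do not "come from the boundary terms" (those vanish by design of the Pochhammer contour); they come from commuting the powers of $x$ past the kernel, as you in fact describe correctly in your second paragraph.
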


\begin{rem} \label{apositive}

It is shown in \cite[Cor.~4.16]{ReiterBogner} that under some mild restrictions 
the operator $\mathcal{C}_a(L)$ has a right factor
$L\ast_C (\vartheta-a)$ that coincides with the 
differential operator associated to the middle convolution $\MC_\chi(\L)$ via 
the Riemann-Hilbert correspondence (where $\L$ corresponds to $L$ under the
Riemann-Hilbert correspondence). 
Similarly we get the statement for the Hadamard product
$\mathcal{H}_a(L)=\mathcal{C}_{1-a}(L \ten (\vartheta-(a-1)))$ of $L$ with $L_a=(\vartheta-x(\vartheta+a))$.
We denote the irreducible right factor by $L\ast_H L_a$.
It will turn out that  in our situation we can  easily determine the right factor via \cite[Prop.~4.17]{ReiterBogner}.
\end{rem}

\section{Jordan forms in $G_2$ and exceptional isomorphisms}\label{sec3}
Let us collect the information on the
conjugacy classes of the simple algebraic group $G_2.$ Below, we
list the possible Jordan canonical forms 
of elements of the group $G_2(\CC)\leq \GL_7(\CC)$ together with the 
dimensions of the centralizers in the groups $G_2(\CC),\;\SO_7(\CC)$ and in the 
group $\GL_7(\CC).$ 
The list exhausts all possible cases, cf. \cite[Section 1.3]{DR07}.
We use the following conventions: $1_n\in \CC^{n\times n}$
 denotes the identity matrix, 
$\Jordan(n)$ denotes a unipotent 
Jordan 
block of size $n,$ $\omega\in \CC^\times$ 
 denotes a primitive $3$-rd root
of unity,  and $i\in \CC^\times$ denotes a primitive $4$-th root of unity.
Moreover, an expression like $(x\Jordan(2),x^{-1}\Jordan(2),x^2,x^{-2},1)$
denotes a matrix in Jordan canonical form 
in $\GL_7(\CC)$ with one Jordan block of 
size 2 having eigenvalue $x,$ one Jordan block of 
size 2 having eigenvalue $x^{-1},$ and three Jordan blocks of size $1$ 
having eigenvalues $x^2,x^{-2},1$ (resp.).
We also abbreviate a tuple $(x,\ldots,x)$ of length $n$ by $x1_n.$ \\
The Table of $\GL_7$ conjugacy classes of $G_2$ is as follows:
\begin{equation*}
\begin{array}{|c|c|c|}
\hline
 \textrm{Jordan form}  &\mbox{\rm Centralizer dimension in} &  
\textrm{Conditions}\\
               & G_2  \quad \quad \SO_7\quad\quad \GL_7&  \\
\hline 
 (1,1,1,1,1,1,1)& 14 \quad\quad 21 \quad \quad  49&\\
(\Jordan(2),\Jordan(2),1,1,1)& 8\quad\quad 13 \quad \quad 29&\\
 (\Jordan(3),\Jordan(2),\Jordan(2))& 6 \quad\quad 9 \quad \quad 19&\\
(\Jordan(3),\Jordan(3),1)& 4\quad\quad 7 \quad \quad 17&\\
 \Jordan(7)& 2 \quad\quad 3 \quad \quad 7&\\
%\hline
% (-1,-1,-1,-1,1,1,1)& 6\quad\quad 9 \quad \quad  25 & \\
%(-\Jordan(2),-\Jordan(2),1,1,1)&4\quad\quad 7 \quad \quad 17&\\
\hline
(-1,-1,-1,-1,1,1,1)& 6\quad\quad 9 \quad \quad  25 & \\
(-\Jordan(2),-\Jordan(2),1,1,1)&4\quad\quad 7 \quad \quad 17&\\
(-\Jordan(2),-\Jordan(2),\Jordan(3))&4\quad\quad 5 \quad \quad 11&\\
(-\Jordan(3),-1,\Jordan(3))&2\quad\quad 3 \quad \quad 9&\\
\hline
(\omega,\omega,\omega, 1,\omega^{-1},\omega^{-1},\omega^{-1} )& 8\quad\quad 9 \quad \quad 19 & \\
(\omega \Jordan(2),\omega^{-1}\Jordan(2),\omega,\omega^{-1},1)&4\quad\quad 5 \quad \quad 11&\\
(\omega \Jordan(3),\omega^{-1}\Jordan(3),1 )&2\quad\quad 3 \quad \quad 7&\\
\hline 
(i,i,-1,1,i^{-1},i^{-1},-1) &4\quad\quad 5 \quad \quad   13&\\
(i\Jordan(2),i^{-1}\Jordan(2),-1,-1,1)&2\quad\quad 3 \quad \quad 9 &\\
\hline
%\hline
(x,x,x^{-1},x^{-1},1,1,1)&4 \quad\quad 7 \quad \quad 17& x^2\not=1\\
(x,x,x^2,1,x^{-1},x^{-1},x^{-2})&4 \quad\quad 5 \quad \quad 11&x^4\not=1\not=x^3\\
(x,-1,-x,1,-x^{-1},-1,x^{-1})&2\quad\quad 3 \quad \quad 9& x^4\not=1 \\
(x\Jordan(2),x^{-1}\Jordan(2),x^2,x^{-2},1)&2\quad\quad 3 \quad \quad 7 & x^4\not=1\neq x^3\\
(x\Jordan(2),x^{-1}\Jordan(2),\Jordan(3))&2 \quad\quad 3 \quad \quad 7& x^2\not=1\\
\hline
(x,y,xy,1,(xy)^{-1},y^{-1},x^{-1})&2\quad\quad 3 \quad \quad 7 & \textrm{pairw. diff. eigenvalues}\\
%&x,y,xy,1,\\
%&&(xy)^{-1},y^{-1},x^{-1},-1\\
%&& \textrm{pairwise different}\\
\hline
\end{array}
\end{equation*}

Later, we will need information on how Jordan forms are transformed under the exceptional isomorphisms $\SO_6=\La^2 \SL_4$ and $\SO_5=\La^2 \SP_4.$
Under  the exceptional isomorphism $\SO_6=\La^2 \SL_4$ selected Jordan forms 
are transformed as follows:

\begin{equation*}\label{table2}
\begin{array}{|ccc|}
\hline 
 \textrm{Jordan form in } \SO_6              & \leftrightarrow & \textrm{Jordan form } \Lambda^2\SL_4    \\
\hline 
 (\Jordan(2),\Jordan(2),1,1)&\leftrightarrow & \La^2 (\Jordan(2),1,1)\\
% (\Jordan(3),1,1,1)&\leftrightarrow &\La^2(\Jordan(2),\Jordan(2))\\
%(\Jordan(3),\Jordan(3))&\leftrightarrow &  \La^2 (\Jordan(3),1)\\
 (\Jordan(5),1)&\leftrightarrow &\La^2 (\Jordan(4))\\
%\hline
% (-1,-1,-1,-1,-1,-1)&\leftrightarrow &\La^2 (i 1_4) \\
% (-1,-1,-1,-1,1,1)&\leftrightarrow &\La^2 (1,1,-1,-1)  \\
%(-\Jordan(2),-\Jordan(2),1,1)&\leftrightarrow &\La^2(-\Jordan(2),1,1) \\
%(-\Jordan(3),-1,1,1)&\leftrightarrow &\La^2(-\Jordan(2),\Jordan(2))\\
(\Jordan(3),1,-1,-1)&\leftrightarrow &\La^2(i\Jordan(2),-i\Jordan(2)) \\
%\hline
%(\omega 1_3,\omega^{-1}1_3 )&\leftrightarrow & \La^2 (\omega 1_3,1)  \\
%(\omega \Jordan(2),\omega^{-1}\Jordan(2),\omega,\omega^{-1})&\leftrightarrow &\La^2 (\omega \Jordan(2),\omega,1)\\
(\omega \Jordan(3),\omega^{-1}\Jordan(3) )&\leftrightarrow &\La^2(\omega \Jordan(3),1)  \\
%\hline 
%(i,i,-1,-1,i^{-1},i^{-1}) &\leftrightarrow & \La^2(i,i,-1,1)\\
%(i\Jordan(2),i^{-1}\Jordan(2),-1,-1)&\leftrightarrow &\La^2(i\Jordan(2),1,-1)\\%\hline
%(x,x,x^{-1},x^{-1},1,1)&\leftrightarrow &\La^2(x,x^{-1},1,1)\\
%(x,x,x^2,x^{-1},x^{-1},x^{-2})&\leftrightarrow &\La^2(x,x,x^{-2},1) \\
%(x,-1,-x,-x^{-1},-1,x^{-1})&\leftrightarrow & \La^2(x,-x,x^{-2},-1) \\
%(x\Jordan(2),x^{-1}\Jordan(2),1,1)&\leftrightarrow &\La^2(\Jordan(2),x,x^{-1})  \\
     (x\Jordan(2),x^{-1}\Jordan(2),x^2,x^{-2})                          &\leftrightarrow &\La^2 (x\Jordan(2),1,x^{-2})\\
(\Jordan(3),1,x^2,x^{-2})&\leftrightarrow &\La^2(x\Jordan(2),x^{-1}\Jordan(2))  \\
%\hline
(x,y,xy,(xy)^{-1},y^{-1},x^{-1})&\leftrightarrow &\La^2(x,y,(xy)^{-1},1) \\

\hline
\end{array}
\end{equation*}

Under  the exceptional isomorphism $\SO_5=\La^2 \SP_4$ the Jordan forms 
are transformed as follows:

\begin{equation*}\label{table3}
\begin{array}{|c c c|}
\hline
 \textrm{Jordan form in } \SO_5              & \leftrightarrow & \textrm{Jordan form } \Lambda^2\SP_4    \\
\hline 
% (\Jordan(2),\Jordan(2),1)& \leftrightarrow & \La^2 (\Jordan(2),1,1)\\
% (\Jordan(3),1,1)& \leftrightarrow &\La^2(\Jordan(2),\Jordan(2))\\
 \Jordan(5)& \leftrightarrow &\La^2 (\Jordan(4))\\
%\hline
% (1,1,1,1,1)& \leftrightarrow &\La^2 (-1,-1,-1,-1)  \\
% (-1,-1,-1,-1,1)& \leftrightarrow &\La^2 (1,1,-1,-1)  \\
%(-\Jordan(2),-\Jordan(2),1)& \leftrightarrow &\La^2(-\Jordan(2),1,1) \\
(-\Jordan(3),-1,1)& \leftrightarrow &\La^2(-\Jordan(2),\Jordan(2))\\
(\Jordan(3),x^2,x^{-2})& \leftrightarrow &\La^2(x\Jordan(2),x^{-1}\Jordan(2))
 \\
(x\Jordan(2),x^{-1}\Jordan(2),1)& \leftrightarrow &\La^2(\Jordan(2),x,x^{-1})
 \\
(xy,xy^{-1},x^{-1}y,{xy}^{-1},1)& \leftrightarrow &\La^2(x,y,y^{-1},x^{-1}) \\
\hline
\end{array}
\end{equation*}

The proof of the above statements is a straightforward computation using bases and is hence omitted.

\section{The possible cases}

Recall the following result of Scott \cite{Scott77}:

\begin{lem}\label{Scott} Let $K$ be an algebraically closed 
 field and let $V$ be an $n$-dimensional 
$K$-vector space.
   Let $(T_1,\ldots,T_{r+1})\in \GL(V)^{r+1}$ with $T_1\cdots T_{r+1}=1$
 such that $\langle T_1,\ldots, T_{r+1}\rangle$ is irreducible. Then the following statements
 hold:
 \begin{eqnarray*}
   \sum_{i=1}^{r+1} \rk(T_i-1)  \geq  2n && \textit{(Scott Formula)} \\
  \sum_{i=1}^{r+1} \dim (C_{\GL(V)}(T_i)) \leq  (r-1) n^2+2 && \textit{(Dimension count)},
\end{eqnarray*}
 where $\dim (C_{\GL(V)}(T_i))$ denotes the dimension of the centralizer of $T_i$ in 
 $\GL(V)$. 
\end{lem}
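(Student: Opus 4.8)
The plan is to deduce both inequalities from one linear‑algebra argument, applied first to $V$ and then to the $KG$‑module $\End(V)$ (with $G=\langle T_1,\dots,T_{r+1}\rangle$ acting by conjugation). Write $n=\dim V$, $m=r+1$, and set $h_0=\id$, $h_j=T_1T_2\cdots T_j$ for $1\le j\le m$, so $h_m=\id$ by hypothesis. Everything rests on the telescoping identity $\sum_{j=1}^{m}h_{j-1}(T_j-\id)=\sum_{j=1}^{m}(h_j-h_{j-1})=h_m-h_0=0$ in $\End(V)$.

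Inside $V^{\oplus m}$ I would introduce the subspace $Q:=\bigoplus_{j=1}^m\im(T_j-\id)$, of dimension $\sum_j\rk(T_j-\id)$, the linear map $\beta\colon V^{\oplus m}\to V$, $(v_1,\dots,v_m)\mapsto\sum_j h_{j-1}v_j$, and the subspace $B:=\{\,((T_j-\id)u)_{j=1}^{m}\mid u\in V\,\}$. Then: (a) $B\subseteq Q$ trivially, and $B\subseteq\ker\beta$ by the telescoping identity; the map $u\mapsto((T_j-\id)u)_j$ has kernel $V^G$, which is $0$ since $V$ is a nontrivial simple $KG$‑module, so $\dim B=n$. (b) $\im(\beta|_Q)=\sum_j h_{j-1}\im(T_j-\id)=\sum_j\im(h_j-h_{j-1})$ is all of $V$: a functional $\phi$ killing it satisfies $\phi\circ h_j=\phi\circ h_{j-1}$ for all $j$, hence $\phi\circ h_j=\phi$ for all $j$ (as $h_0=h_m=\id$), hence $\phi\circ T_j=\phi$ for all $j$, i.e. $\phi$ annihilates $\sum_j\im(T_j-\id)$; but $\sum_j\im(T_j-\id)$ is a $G$‑submodule — it is carried into itself by each $T_k$ via $T_k(T_j-\id)=(T_k-\id)T_j+(T_j-\id)-(T_k-\id)$, hence (finite dimension, $T_k$ invertible) is $T_k$‑stable, hence $G$‑stable — whose quotient is $V_G=0$; so it equals $V$ and $\phi=0$. (c) Therefore $\sum_j\rk(T_j-\id)=\dim Q=\dim(Q\cap\ker\beta)+\dim\im(\beta|_Q)\ge\dim B+n=2n$, which is the Scott Formula.

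For the dimension count I would run the identical argument with $V$ replaced by $W=\End(V)$, each $T_j$ acting as $A\mapsto T_jAT_j^{-1}$; the $\rk$ of the corresponding ``$T_j-\id$'' on $W$ is $n^2-\dim C_{\End(V)}(T_j)=n^2-\dim C_{\GL(V)}(T_j)$. The only difference is that $W$ is not simple: by Schur's lemma (here $K=\CC$ is algebraically closed and $V$ is simple) $W^G=K\cdot\id$ has dimension $1$, and since the trace form gives a $G$‑isomorphism $W^*\cong W$, the coinvariant module $W_G$ also has dimension $1$. Steps (a) and (b) then yield $\dim B_W=n^2-1$ and $\dim\im(\beta_W|_{Q_W})\ge n^2-1$, so $\sum_{j}\bigl(n^2-\dim C_{\GL(V)}(T_j)\bigr)\ge 2(n^2-1)$; rearranging gives $\sum_{j=1}^{r+1}\dim C_{\GL(V)}(T_j)\le(r+1)n^2-2(n^2-1)=(r-1)n^2+2$.

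The whole argument is short and elementary; the main thing to get right is not a computation but a structural point: when one passes from $V$ to $\End(V)$ the module stops being simple, so the lower bound in (b) degrades from $n^2$ to $n^2-\dim W_G=n^2-1$, and this defect $\dim W^G+\dim W_G=2$ coming from the scalars is exactly the ``$+2$'' in the dimension count. The routine verifications — that $\sum_j\im(T_j-\id)$, respectively its $\End(V)$‑analogue, is a $G$‑submodule with coinvariant quotient $V_G$, respectively $W_G$ — I would state without further detail.
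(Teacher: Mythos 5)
Your proof is correct, and it is genuinely more self-contained than what the paper does: the paper gives no argument at all, quoting the lemma from Scott's 1977 paper \emph{Matrices and cohomology}. Scott's original proof is (co)homological — the quantities $\rk(T_i-\id)$, $\dim V^G$, $\dim V_G$ appear as dimensions of coboundary/cocycle spaces attached to the genus-zero relation $T_1\cdots T_{r+1}=1$ — and your argument is exactly the elementary linearization of that: your $B$ is the space of principal (coboundary) tuples, $Q\cap\ker\beta$ plays the role of the cocycles, and rank–nullity replaces the long exact sequence. Your derivation of the second inequality by running the same machine on the adjoint module $W=\End(V)$, with the defect $\dim W^G+\dim W_G=2$ coming from the scalars, is precisely how the ``dimension count'' is usually deduced from Scott's formula, so the two routes buy the same thing; yours has the merit of making the source of the $+2$ visible, the paper's citation is shorter. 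Three small points you should clean up: (i) the first inequality needs $V$ to be a \emph{nontrivial} simple module, as you note — in the degenerate case $n=1$, all $T_i=\id$, the stated bound $\sum\rk(T_i-\id)\ge 2$ fails, so your inserted hypothesis is really needed (harmless for the paper, which applies the lemma with $n=7$ and nontrivial local monodromies); (ii) the lemma is over an arbitrary algebraically closed field $K$, not $\CC$, but nothing in your argument uses more than Schur's lemma and the trace-form self-duality $W^*\cong W$, both valid in that generality, so just drop the ``$K=\CC$''; (iii) the identification $\dim C_{\GL(V)}(T_i)=\dim C_{\End(V)}(T_i)$ merits its one-line justification (the units of the centralizer algebra form a Zariski-dense open subset), since it is the bridge between the linear-algebra rank on $W$ and the group-theoretic centralizer dimension appearing in the statement.
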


Let $(J_1,\ldots,J_{r+1})$ be a tuple of matrices in Jordan form which 
occur in the group $G_2.$ We want to answer the question, if 
there exists a tuple of elements $(T_1,\ldots,T_{r+1})\in G_2(\CC)^{r+1}$ having Jordan 
forms $J_1,\ldots,J_{r+1}$ (resp.) which is the monodromy
tuple of  
an orthogonally rigid local system whose
monodromy group is Zariski dense in the group $G_2(\CC)$ 
(especially, this implies that $T_1 \cdots T_{r+1}=1$ and that the monodromy group
 $\langle T_1,\ldots,T_{r+1}\rangle$ is irreducible). By definition, orthogonal
 rigidity implies that \begin{equation}\label{weilform}
  \sum_{i=1}^{r+1} {\rm codim}(C_{O_7}(T_i))= \sum_{i=1}^{r+1} \left(21-{\rm dim}(C_{O_7}(T_i))\right)=2\dim(O_7)=42.\end{equation}
  Moreover, again by the irreducibility, the tuple $(T_1,\ldots,T_{r+1})$ satisfies 
  the formulae of Lemma~\ref{Scott}. 
 The condition \eqref{weilform} together with the list of possible 
 centralizer dimensions gives the following possibilities for tuples
 of centralizer dimensions $N_i^{O_7}:=\dim(C_{O_7}(T_i)),$
 resp. $N^{\GL_7}_i:=\dim(C_{\GL_7}(T_i))$ (in the case that they contradict the Scott Formula (Lemma~\ref{Scott})   this is noted in 
 the last column by red.):

\begin{center}
\begin{tabular}{|c|c||c| c|} 
\hline 
{\rm case}&$(N_1^{O_7},\ldots,N_{r+1}^{O_7})$&$(N_1^{\GL_7},\ldots,
                  N_{r+1}^{\GL_7})$
                 & remarks\\
\hline
$P_1$ &(13, 5, 3) & (29,  13,  9)&    red. \\
    & &(29, 13, 7) &\\
    & & (29, 11, 9)&\\
    & & (29, 11, 7)&\\
\hline 
$P_2$ &(9, 9, 3)& (25, 25, 9/7)  & red.\\
   &    &    (25, 19, 9) & red.\\
    & & (25, 19, 7) & lin. rigid, s. \cite{DR07}\\
    & & (19, 19, 9) &\\
    & & (19, 19, 7)& \\
\hline 
$P_3$ & (9, 7, 5)& (25,  17, 13/11)&  red.\\
    &  & (19, 17, 13) &\\
    & &(19, 17, 11) &\\
\hline 
$P_4$ &(7, 7, 7) & (17, 17, 17) &  red., s. \cite{DR07}\\
\hline
$P_5$ &(8, 8, 8,  4) & (13,  13, 9, 7)& \\
\hline

\end{tabular}
\vspace{.5cm}

\end{center}

\begin{rem}
The only irreducible subgroups of $G_2(\CC)$ are either finite or the group $\SL_2(\CC)$.
Thus
the Zariski closure of an irreducible subgroup containing an unipotent element
with Jordan form different form $\Jordan(7)$ is $G_2(\CC)$.
\end{rem}

\section{The main result}

\begin{thm} \label{mainthm} Let $\L$ be an orthogonally rigid local system on a punctured projective
line $\PP^1\setminus 
\{x_1,\ldots,x_{r+1}\}$  of rank $7$ whose monodromy group
is dense in the exceptional simple group $G_2.$ If $\L$ has nontrivial local 
monodromy at $x_1,\ldots,x_{r+1},$ then $r=2,3$ and 
$\L$ can be constructed by applying iteratively a 
 sequence of tensor operations, middle convolutions $\MC_\chi,$ and rational pullbacks,
  applied to a local
 system of rank one. Moreover the Jordan canonical forms (under the additional assumptions 
as given in Section~\ref{sec3}) of the local monodromies of $\L$ 
 are as follows:
 \begin{enumerate}
 \item The case $P_1:$ 
{\small
$$
\begin{array}{c|ccc} 
 P_1   &       & &              \\
\hline
 1   &         (  \Jordan(2),  \Jordan(2),1,1,1)& (i,i,-1,1,-1,-i,-i) & \Jordan(7)\\
  2   &         (  \Jordan(2),  \Jordan(2),1,1,1)& (i,i,-1,1,-1,-i,-i) & (\omega \Jordan(3),\omega^{-1}\Jordan(3),1)\\
  3   &           (  \Jordan(2),  \Jordan(2),1,1,1) & (i,i,-1,1,-1,-i,-i)& (x\Jordan(2),x^{-1}\Jordan(2),x^2,x^{-2},1) \\
 4 &           (  \Jordan(2),  \Jordan(2),1,1,1) & (i,i,-1,1,-1,-i,-i)&(x\Jordan(2),x^{-1}\Jordan(2),\Jordan(3))\\
 5&           (  \Jordan(2),  \Jordan(2),1,1,1) &(i,i,-1,1,-1,-i,-i)&(x,y,xy,1,(xy)^{-1},y^{-1},x^{-1})\\
&&&   \pm i \not \in \{ x,y,xy\}\\
6&  (  \Jordan(2),  \Jordan(2),1,1,1) &(-\Jordan(2),-\Jordan(2),\Jordan(3))& \Jordan(7) \\
 7&  (  \Jordan(2),  \Jordan(2),1,1,1) &(-\Jordan(2),-\Jordan(2),\Jordan(3))&(\omega \Jordan(3),\omega^{-1}\Jordan(3),1 )\\
 8&  (  \Jordan(2),  \Jordan(2),1,1,1) &(-\Jordan(2),-\Jordan(2),\Jordan(3))&(x\Jordan(2),x^{-1}\Jordan(2),x^2,x^{-2},1)\\
 9&  (  \Jordan(2),  \Jordan(2),1,1,1) &(-\Jordan(2),-\Jordan(2),\Jordan(3))&(x\Jordan(2),x^{-1}\Jordan(2),\Jordan(3))\\
 10&  (  \Jordan(2),  \Jordan(2),1,1,1) &(-\Jordan(2),-\Jordan(2),\Jordan(3))&(x,y,xy,1,(xy)^{-1},y^{-1},x^{-1})\\
 
11&  (  \Jordan(2),  \Jordan(2),1,1,1) &(z,z,z^{-1},z^{-1},z^2,z^{-2},1)&(-\Jordan(3),-1,\Jordan(3))\\ 
 12&  (  \Jordan(2),  \Jordan(2),1,1,1) &(z,z,z^{-1},z^{-1},z^2,z^{-2},1)&(i\Jordan(2),i^{-1}\Jordan(2),-1,-1,1)\\
 13&  (  \Jordan(2),  \Jordan(2),1,1,1) &(z,z,z^{-1},z^{-1},z^2,z^{-2},1)&(x,-1,-x,1,-x^{-1},-1,x^{-1})\\

14&  (  \Jordan(2),  \Jordan(2),1,1,1) &(z,z,z^{-1},z^{-1},z^2,z^{-2},1)&\Jordan(7)\\
 15&  (  \Jordan(2),  \Jordan(2),1,1,1) &(z,z,z^{-1},z^{-1},z^2,z^{-2},1)&(\omega \Jordan(3),\omega^{-1}\Jordan(3),1 )\\
 16&  (  \Jordan(2),  \Jordan(2),1,1,1)
 &(z,z,z^{-1},z^{-1},z^2,z^{-2},1)&(x\Jordan(2),x^{-1}\Jordan(2),x^2,x^{-2},1)\\
&&&  x\neq z^{\pm 1}\\
 17&  (  \Jordan(2),  \Jordan(2),1,1,1) &(z,z,z^{-1},z^{-1},z^2,z^{-2},1)&(x\Jordan(2),x^{-1}\Jordan(2),\Jordan(3))\\
 &&&x z^{\pm 1}\neq 1 \\ 
 18&  (  \Jordan(2),  \Jordan(2),1,1,1)
 &(z,z,z^{-1},z^{-1},z^2,z^{-2},1)&(x,y,xy,1,(xy)^{-1},y^{-1},x^{-1}) \\
&&&  z \not \in \{ x,y,xy,1,(xy)^{-1},y^{-1},x^{-1} \} \\
19&  (  \Jordan(2),  \Jordan(2),1_3) &(\omega \Jordan(2),\omega^{-1}\Jordan(2),\omega,\omega^{-1},1)&(-\Jordan(3),-1,\Jordan(3))\\
 20&  (  \Jordan(2),  \Jordan(2),1_3) &(\omega \Jordan(2),\omega^{-1}\Jordan(2),\omega,\omega^{-1},1)&(i\Jordan(2),i^{-1}\Jordan(2),-1,-1,1)\\
 21&  (  \Jordan(2),  \Jordan(2),1_3) &(\omega
 \Jordan(2),\omega^{-1}\Jordan(2),\omega,\omega^{-1},1)&(x,-1,-x,1,-x^{-1},-1,x^{-1})\\
&&&   x^6\neq 1\\
22&  (  \Jordan(2),  \Jordan(2),1_3) &(\omega \Jordan(2),\omega^{-1}\Jordan(2),\omega,\omega^{-1},1)&\Jordan(7)\\
 23&  (  \Jordan(2),  \Jordan(2),1_3) &(\omega
 \Jordan(2),\omega^{-1}\Jordan(2),\omega,\omega^{-1},1)&(x\Jordan(2),x^{-1}\Jordan(2),x^2,x^{-2},1)\\
   &&&   x^6\neq 1 \\
 24&  (  \Jordan(2),  \Jordan(2),1_3) &(\omega
 \Jordan(2),\omega^{-1}\Jordan(2),\omega,\omega^{-1},1)&(x\Jordan(2),x^{-1}\Jordan(2),\Jordan(3))\\
&&&   x^3\neq 1\\
 25&  (  \Jordan(2),  \Jordan(2),1_3) &(\omega \Jordan(2),\omega^{-1}\Jordan(2),\omega,\omega^{-1},1)&(x,y,xy,1,(xy)^{-1},y^{-1},x^{-1})\\
&&& x^3\neq 1, y^3\neq 1,(xy)^3 \neq 1\\
\end{array}
$$}

\item The case $P_2:$ 
  The linearly rigid case $P_2(25,19,7)$ is settled in \cite{DR07} and is therefore omitted.

{ \small \[\begin{array}{c|ccccc}
  P_2(19,19,9)   &  &    & & & \#\\
\hline
  2   &      &   (  \Jordan(2),  \Jordan(2),  \Jordan(3))& (  \Jordan(2),  \Jordan(2),  \Jordan(3)) & (i\Jordan(2),i^{-1}\Jordan(2),1,-1_2)&2\\
 3 &        &   (  \Jordan(2),  \Jordan(2),  \Jordan(3)) &(  \Jordan(2),  \Jordan(2),  \Jordan(3)) &(x,-x,-x^{-1},x^{-1},1,-1_2)&4\\
\end{array}\]}
{\small\[ \begin{array}{c|ccccc}
P_2(19,19,7)    &   &    & & &\\
\hline
 1   &      &   (  \Jordan(2),  \Jordan(2),  \Jordan(3))& (  \Jordan(2),  \Jordan(2),  \Jordan(3)) &\Jordan(7)&1\\
  2   &      &   (  \Jordan(2),  \Jordan(2),  \Jordan(3))& (  \Jordan(2),  \Jordan(2),  \Jordan(3)) & (\omega \Jordan(3),\omega^{-1}\Jordan(3),1)&1\\
 3 &        &   (  \Jordan(2),  \Jordan(2),  \Jordan(3)) &(  \Jordan(2),  \Jordan(2),  \Jordan(3)) &(x\Jordan(2),x^{-1 }\Jordan(2),x^2,x^{-2},1)&2\\
 4 &        &   (  \Jordan(2),  \Jordan(2),  \Jordan(3)) &(  \Jordan(2),  \Jordan(2),  \Jordan(3)) &(x\Jordan(2),x^{-1 }\Jordan(2),\Jordan(3))&2 \\
 5 &        &   (  \Jordan(2),  \Jordan(2),  \Jordan(3)) &(  \Jordan(2),  \Jordan(2),  \Jordan(3)) &(x,y,xy,1,(xy)^{-1},y^{-1},x^{-1})&4
 \end{array}\]}

 \item The case $P_3:$ 
  {  \small \[\begin{array}{c|ccccc}
   P_3  &  &    & & &\#\\
\hline

 5&        &   (  \Jordan(2),  \Jordan(2),  \Jordan(3)) &(-\Jordan(2),-\Jordan(2),1_3)&(i,i,-1,1,i^{-1},i^{-1},-1)&2\\
 6&        &   (  \Jordan(2),  \Jordan(2),  \Jordan(3)) &(-\Jordan(2),-\Jordan(2),1_3)&(-\Jordan(2),-\Jordan(2),\Jordan(3))&1\\
 7&        &   (  \Jordan(2),  \Jordan(2),  \Jordan(3)) &(-\Jordan(2),-\Jordan(2),1_3)&(x,x,x^2,1,x^{-1},x^{-1},x^{-2})&2\\
 8&        &   (  \Jordan(2),  \Jordan(2),  \Jordan(3)) &(-\Jordan(2),-\Jordan(2),1_3)&(\omega \Jordan(2),\omega^{-1}\Jordan(2),\omega,\omega^{-1},1)&2\\

 9&        &   (  \Jordan(2),  \Jordan(2),  \Jordan(3)) &(z,z,z^{-1},z^{-1},1_3)&(i,i,-1,1,i^{-1},i^{-1},-1)&2\\
 &&& z^4 \neq 1\\
 10&        &   (  \Jordan(2),  \Jordan(2),  \Jordan(3)) &(z,z,z^{-1},z^{-1},1_3)&(-\Jordan(2),-\Jordan(2),\Jordan(3))&2\\
 11&        &   (  \Jordan(2),  \Jordan(2),  \Jordan(3)) &(z,z,z^{-1},z^{-1},1_3)&(x,x,x^2,1,x^{-1},x^{-1},x^{-2})&2\\
 &&& x^{\pm 3} z^{\pm 1}\neq 1 ,x^{\pm 1} z^{\pm 1}\neq 1\\
11&        &   (  \Jordan(2),  \Jordan(2),  \Jordan(3)) &(z,z,z^{-1},z^{-1},1_3)&(x,x,x^2,1,x^{-1},x^{-1},x^{-2})&1\\
&&& x^{\pm 3} z^{\pm 1}= 1 ,x^{\pm 1} z^{\pm 1}= 1\\
 12&        &   (  \Jordan(2),  \Jordan(2),  \Jordan(3)) &(z,z,z^{-1},z^{-1},1_3)&(\omega \Jordan(2),\omega^{-1}\Jordan(2),\omega,\omega^{-1},1)&2\\
&&& z^{3}\neq 1 \\
12&        &   (  \Jordan(2),  \Jordan(2),  \Jordan(3)) &(z,z,z^{-1},z^{-1},1_3)&(\omega \Jordan(2),\omega^{-1}\Jordan(2),\omega,\omega^{-1},1)&1\\
&&& z^{3}= 1 \\
\end{array}\]}

\item The case $P_5:$ 
{\small \[ \begin{array}{c|cccccc}
 P_5   &   &    & & &&\#\\
\hline
1    &      &   (  \Jordan(2),  \Jordan(2),1_3)&  (  \Jordan(2),
\Jordan(2),1_3)& (\omega 1_3,1,\omega^{-1} 1_3) & (\Jordan(3),\Jordan(3),1)&1\\
 2   &      &   (  \Jordan(2),  \Jordan(2),1_3)&  (  \Jordan(2),
 \Jordan(2),1_3)&  (\omega 1_3,1,\omega^{-1} 1_3)  & (-\Jordan(2),-\Jordan(2),1_3)&1\\
3    &      &   (  \Jordan(2),  \Jordan(2),1_3)&  (  \Jordan(2),
\Jordan(2),1_3)&  (\omega 1_3,1,\omega^{-1} 1_3) & (x,x,x^{-1},x^{-1},1_3)&1\\
  \end{array} \]}
\end{enumerate}
For each tuple of Jordan forms in the above list, there exists a local system $\L$ 
of rank $7$ whose monodromy group
is dense in the exceptional simple group $G_2.$ 
The cardinality of equivalence classes with given local monodromies 
under the diagonal conjugation in $G_2$ is listed under $\#.$  \end{thm}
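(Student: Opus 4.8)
The plan is to turn the finite list of numerical possibilities $P_1,\dots,P_5$ of the preceding section into an explicit list of Jordan tuples, to realize each admissible tuple by a chain of middle convolutions, tensor operations and rational pullbacks starting from a rank-$1$ system, and in each case to certify that the monodromy is dense in $G_2$ by passing to the associated Fuchsian differential operator. Concretely, under the standing hypothesis that the local monodromy is nontrivial at every puncture, a hypothetical monodromy tuple $(T_1,\dots,T_{r+1})$ of an orthogonally rigid $G_2$-system satisfies \eqref{weilform} and the inequalities of Lemma~\ref{Scott}; the preceding section thereby cuts the tuple of $O_7$- and $\GL_7$-centralizer dimensions down to finitely many cases, and in particular forces $r\in\{2,3\}$. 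For each surviving tuple of dimensions I would go through the table of $\GL_7$-conjugacy classes of $G_2$ of Section~\ref{sec3}, keeping only the assignments of Jordan forms $(J_1,\dots,J_{r+1})$ with $\prod_i\det(J_i)=1$, with compatible eigenvalue multisets, and for which $\sum_i\rk(J_i-1)\ge 14$ together with irreducibility can hold; this bounded computation produces the tables displayed in the statement, possibly with a few extra tuples to be discarded later.

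\emph{Construction.} For each tuple in the tables I would exhibit a sequence starting from a rank-$1$ local system and applying: $\MC_\chi$, with rank and Jordan data followed through Lemma~\ref{lemmonodromy1}; tensor products with rank-$1$ systems; the operations $\Sym^2$, $\La^2$ and the exceptional isomorphisms $\SO_6=\La^2\SL_4$, $\SO_5=\La^2\SP_4$, tracked through the transformation tables of Section~\ref{sec3}; and, where the table indicates, a pullback along a rational function. Before each middle convolution one checks irreducibility so that Lemma~\ref{lemmonodromy1} applies, and one verifies that the Jordan data after each step match the intended intermediate target, so that the final system has exactly the prescribed local monodromies. Since those monodromies satisfy \eqref{weilform}, the resulting system is orthogonally rigid, and this same chain proves the constructibility assertion of the theorem.

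\emph{Identifying the monodromy group, exclusions, and the count.} To see that the monodromy of a constructed $\L$ is dense in $G_2$, I would compute the Fuchsian operator $L$ corresponding to $\L$ under Riemann--Hilbert by means of Proposition~\ref{Falt} (the operators $\mathcal{C}_a(L)$, $\mathcal{H}_a(L)$ and their irreducible right factors $L\ast_C L_a$, $L\ast_H L_a$), and check that $\La^2(L)$ has a right factor of order $14$; then $\La^2\L$ splits off a rank-$14$ summand, so the monodromy lies in $G_2$, and as it is irreducible and contains a unipotent element different from $\Jordan(7)$ it is neither finite nor contained in $\SL_2(\CC)$, hence Zariski-dense in $G_2$ by the remark following Lemma~\ref{Scott}. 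The same operator computation, applied to the remaining numerically admissible tuples, shows that they produce reducible systems or fail the $\La^2$-criterion and are therefore excluded, while running the construction with all allowed choices of the auxiliary characters and pullbacks realizes every tuple that survives. Finally, orthogonal rigidity gives vanishing parabolic cohomology, so for each realized tuple the set of irreducible $G_2$-tuples with those Jordan forms modulo diagonal $G_2$-conjugation is finite; I would read its cardinality off the construction, since each middle convolution is an equivalence on the relevant moduli of tuples while each exceptional isomorphism or quadratic pullback contributes a fixed finite multiplicity (the two preimages of a class, or the two branches), and the integers in the column $\#$ are the products of these local factors; the linearly rigid case $P_2(25,19,7)$ is treated in \cite{DR07} with count $1$. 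The technical heart of the argument is this last block: proving membership in $G_2$ case by case through the rank-$14$ property of $\La^2(L)$, and using the same information to discard the admissible-but-non-realizable tuples; the enumeration and the Jordan-form bookkeeping are routine but lengthy, and the counting needs care in tracking multiplicities through the pullbacks and exceptional isomorphisms.
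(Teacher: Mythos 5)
Your enumeration of the cases, the Jordan-form bookkeeping through Lemma~\ref{lemmonodromy1} and the exceptional isomorphisms of Section~\ref{sec3}, and the certification of $G_2$-monodromy by checking that $\La^2$ of the associated operator has a factor of order $14$ all follow the paper's actual route (the paper runs the convolution/tensor chain as a \emph{reduction} from rank $7$ down to rank $1$ or $2$, you run it upwards, but since the operations are invertible this is only a change of direction). The genuine gap is in your last step, the determination of the column $\#$. You assert that the cardinality of equivalence classes can be ``read off the construction'' as a product of local multiplicities (two preimages per exceptional isomorphism, two branches per pullback). That is not a proof, and in fact it gives the wrong numbers: the decompositions via $\SO_6=\La^2\SL_4$, $\SO_5=\La^2\SP_4$, $\SO_4=\SL_2\otimes\SL_2$ only bound the number of orthogonally rigid systems with the prescribed \emph{local} data (up to conjugation in the ambient classical group), and they say nothing about how many of these have monodromy dense in $G_2$, nor about when two such systems become equivalent under diagonal $G_2$-conjugation rather than $O_7$-conjugation. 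In the case $P_1$ the construction leaves two candidates with identical local monodromy while the true count is $1$; in $P_3$ it leaves four while the true count is $2$ (and $1$ in the degenerate subcases). The paper closes exactly this gap by an independent counting device you do not supply: reduction of the tuple modulo $\ell$ and computation of normalized structure constants $n(\C(q^k))$ in the finite groups $G_2(q)$ via the generic character table (Prop.~\ref{Propstructconst}, CHEVIE), combined with Thm.~\ref{nC} of the appendix, which converts $\sup_q\lim_k\lfloor n(\C(q^k))\rfloor$ into an upper bound for the number of $G_2(\CC)$-classes of irreducible tuples; in the case $P_2(19,19,7)(1)$ an additional braiding argument is needed to show the unique triple arises from a pullback. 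Without some substitute for this step your argument establishes existence and constructibility but not the stated cardinalities.

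A secondary, smaller imprecision: you propose to discard the numerically admissible but non-realizable tuples by showing their operators are reducible or fail the $\La^2$-criterion, whereas the paper's exclusions fall out of the reduction itself (e.g.\ in $P_3$, cases (1)--(4), the reduced rank-$5$ system would have a local Jordan block pattern $(\Jordan(3),\Jordan(3))$, which is impossible, so the original system cannot exist). Your route could in principle be made to work, but as written it presupposes you already have an operator attached to a tuple whose existence is precisely what is in question; the reduction argument avoids this circularity. Also note that for existence the paper does not rely on the $\La^2$-criterion alone: in $P_2(19,19,9)$ and $P_5$ existence comes from explicit quadratic pullbacks of the linearly rigid $G_2$-systems of \cite{DR07}, and in $P_3$ the two inequivalent systems are exhibited as $P_3(b)$ and $P_3(b+1/2)$; your write-up should say where the lower bounds on $\#$ come from in these cases.
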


\proof The proof  is divided into the cases $P_1,P_2,P_3,P_5,$ where
each case is dealt with in one of the   following subsections:

\subsection{The case $P_1$} 
We can assume $x_1=0, x_2=1,x_3=\infty.$
If $\phi,\phi':\pi_1(\GG_m)\to \CC^\times$ are characters, there exists a unique
local system $\L(\phi,\phi')$ of rank one on $\AA^1\setminus \{0,1\}$ whose local monodromies at $0,1$ is
$\phi,\phi',$ resp. The functor, which sends a local system $\L$ 
on $\AA^1\setminus\{0,1\}$ to $\L\otimes \L(\phi,\phi')$ is denoted 
$\MT_{\L(\phi,\phi')}.$

The irreducibility condition 
and the Scott Formula (Lemma~\ref{Scott}) imply that only the possibilities listed in the case~$P_1$ of 
Thm.~\ref{mainthm} occur. At first we prove that in each of the listed cases, 
there exists an  orthogonally rigid local system
that can be reduced to a rank $1$ system via the middle
convolution and tensor products. 
Applying the functor
\[ M_\phi= \MC_\phi \circ \MT_{\L(1,\phi^{-1})}  \circ \MC_{\phi^{-1}}  \circ \MT_{\L(1,\phi)},\]
where 
\[ \phi=\left\{ \begin{array}{ccc}
                    i & 1)-5) \\
                     -1& 6)-10)\\
                    z & 11)-18)\\
                    \omega& 19)-26)
                \end{array} \right.,\]
we obtain an orthogonally rigid local system of rank $5$ or $6$ by \cite[Thm. 2.4.(i)]{DRFuchsian} and \cite[Cor. 5.15]{dr00}.
The change of the Jordan form of the local monodromies in each step
can be traced via Lemma~\ref{lemmonodromy1}.
The result is
{ \small \[\begin{array}{c|c|cccc}
  nr.   & \rk  &    & & &\\
\hline
  1  &  5   &   (  \Jordan(2),  \Jordan(2),1)& (  \Jordan(3),-1,-1) & \Jordan(5) &\\
  2   &  6   &   (  \Jordan(2),  \Jordan(2),1,1)& (  \Jordan(3),1,-1,-1) & (\omega \Jordan(3),\omega^{-1}\Jordan(3))&\\
  3   &    6   &   (  \Jordan(2),  \Jordan(2),1,1) & (  \Jordan(3),1,-1,-1)& (x\Jordan(2),x^{-1}\Jordan(2),x^2,x^{-2})&\\ 
 4 &    5   &   (  \Jordan(2),  \Jordan(2),1) & (  \Jordan(3),-1,-1)&(x\Jordan(2),x^{-1}\Jordan(2),1)&\\
 5&    6   &   (  \Jordan(2),  \Jordan(2),1,1) & (  \Jordan(3),1,-1,-1)&(x,y,xy,(xy)^{-1},y^{-1},x^{-1})&\\
 6&    5   &   (  \Jordan(2),  \Jordan(2),1) &  \Jordan(5)&\Jordan(5) &\\
 7&    6   &   (  \Jordan(2),  \Jordan(2),1,1) &  (\Jordan(5),1)&(\omega \Jordan(3),\omega^{-1}\Jordan(3) )&\\
 8&    6   &   (  \Jordan(2),  \Jordan(2),1,1) &(  \Jordan(5),1)&(x\Jordan(2),x^{-1}\Jordan(2),x^2,x^{-2})&\\
 9&    5   &   (  \Jordan(2),  \Jordan(2),1) &  \Jordan(5)&(x\Jordan(2),x^{-1}\Jordan(2),1)&\\
 10&    6   &   (  \Jordan(2),  \Jordan(2),1,1) &(  \Jordan(5),1)&(x,y,xy,(xy)^{-1},y^{-1},x^{-1})&\\
 \end{array}\]}
{ \small \[\begin{array}{c|c|cccc} % &&&&&\\
11&    5   &   (  \Jordan(2),  \Jordan(2),1) &(  \Jordan(3),z^2,z^{-2})&(-\Jordan(3),-1,1)&\\
12&    6   &   (  \Jordan(2),  \Jordan(2),1,1) &(  \Jordan(3),1,z^2,z^{-2})&(i\Jordan(2),i^{-1}\Jordan(2),-1,-1)&\\
 13&    6   &   (  \Jordan(2),  \Jordan(2),1,1) &(  \Jordan(3),1,z^2,z^{-2})&(x,-1,-x,-x^{-1},-1,x^{-1})&\\
14&    5   &   (  \Jordan(2),  \Jordan(2),1) &(  \Jordan(3),1,z^2,z^{-2})&\Jordan(5)&\\
 15&    6   &   (  \Jordan(2),  \Jordan(2),1,1) &(  \Jordan(3),1,z^2,z^{-2})&(\omega \Jordan(3),\omega^{-1}\Jordan(3) )&\\
 16&    6   &   (  \Jordan(2),  \Jordan(2),1,1) &(  \Jordan(3),1,z^2,z^{-2})&(x\Jordan(2),x^{-1}\Jordan(2),x^2,x^{-2})&\\
 17&    5   &   (  \Jordan(2),  \Jordan(2),1) &(  \Jordan(3),z^2,z^{-2})&(x\Jordan(2),x^{-1}\Jordan(2),1)&\\
 18&    6   &   (  \Jordan(2),  \Jordan(2),1,1) &(  \Jordan(3),1,z^2,z^{-2})&(x,y,xy,(xy)^{-1},y^{-1},x^{-1})&\\
 19&    5   &   (  \Jordan(2),  \Jordan(2),1) &(  \Jordan(3),\omega,\omega^{-1})&(-\Jordan(3),-1,1)&\\
 20&    6   &   (  \Jordan(2),  \Jordan(2),1,1) &(  \Jordan(3),1,\omega,\omega^{-1})&(i\Jordan(2),i^{-1}\Jordan(2),-1,-1)&\\
 21&    6   &   (  \Jordan(2),  \Jordan(2),1,1) &(  \Jordan(3),1,\omega,\omega^{-1})&(x,-1,-x,-x^{-1},-1,x^{-1})&\\
 22 & 5   &   (  \Jordan(2),  \Jordan(2),1) &(  \Jordan(3),\omega,\omega^{-1})&\Jordan(5)&\\
 23&    6   &   (  \Jordan(2),  \Jordan(2),1,1) &(  \Jordan(3),1,\omega,\omega^{-1})&(x\Jordan(2),x^{-1}\Jordan(2),x^2,x^{-2})&\\
 24&    5   &   (  \Jordan(2),  \Jordan(2),1) &(  \Jordan(3),\omega,\omega^{-1})&(x\Jordan(2),x^{-1}\Jordan(2),1)&\\
 25&    6   &   (  \Jordan(2),  \Jordan(2),1,1) &(  \Jordan(3),1,\omega,\omega^{-1})&(x,y,xy,(xy)^{-1},y^{-1},x^{-1})&\\
\end{array}\]}

Using tensor identities $\Lambda^2 \SL_4 = \SO_6$ and $\Lambda^2 \SP_4 = \SO_5$ 
and the effect on the Jordan forms listed in Section~\ref{sec3}, we obtain up to two linearly rigid local systems of rank $4$.
(linear rigidity is  synonymously used as physical rigidity in \cite{Katz96} and, in the above terms, it is the same as $\GL_n$-rigidity)
In some cases we get only one local system due to the Scott Formula.
Their  local monodromies in Jordan form is as follows.
{\small \[\begin{array}{c|c|cccc}
  nr.   & \rk  &    & & &\\
\hline
  1 &  4   &   (  \Jordan(2),1,1)& (i\Jordan(2),-i\Jordan(2)) & \Jordan(4)\\
  2   &  4   &   (  \Jordan(2),1,1)& (i\Jordan(2),-i\Jordan(2)) & (\omega \Jordan(2),\omega,1)\\
  3   &    4   &   (  \Jordan(2),1,1) & (i\Jordan(2),-i\Jordan(2))& (x\Jordan(2),1,x^{-2})&\\ 
 4 &    4   &   (  \Jordan(2),1,1) & (i\Jordan(2),-i\Jordan(2))&(  \Jordan(2),x,x^{-1})&\\
 5&    4   &   (  \Jordan(2),1,1) & (i\Jordan(2),-i\Jordan(2))&(x,y,(xy)^{-1},1)&\\
6&    4   &   (  \Jordan(2),1,1) &  \Jordan(4)& -\Jordan(4)\\
 7&    4   &   (  \Jordan(2),1,1) &  \Jordan(4)&(-\omega \Jordan(3),-1 )\\
 8&    4   &   (  \Jordan(2),1,1) &  \Jordan(4)&(-x\Jordan(2),-1,-x^{-2})\\
 9&    4   &   (  \Jordan(2),1,1) &  \Jordan(4)&(  -\Jordan(2),-x,-x^{-1})\\
 10&    4   &   (  \Jordan(2),1,1) &  \Jordan(4)&(-x,-y,-(xy)^{-1},-1)\\
  11&    4   &   (  \Jordan(2),1,1) &(z  \Jordan(2),z^{-1}  \Jordan(2))&(-  \Jordan(2),  \Jordan(2))\\
 12&    4   &   (  \Jordan(2),1,1) &(z  \Jordan(2),z^{-1}  \Jordan(2))& \pm (i\Jordan(2),1,-1)\\
 13&    4   &   (  \Jordan(2),1,1) &(z  \Jordan(2),z^{-1}  \Jordan(2))& \pm (x,-x^{-1},1,-1)\\
\end{array}\]}
{\small \[\begin{array}{c|c|cccc}% &&\\
 14 &    4   &   (  \Jordan(2),1,1) &(z  \Jordan(2),z^{-1}  \Jordan(2))& \pm \Jordan(4)\\
15&    4   &   (  \Jordan(2),1,1) &(z  \Jordan(2),z^{-1}  \Jordan(2))& (\omega \Jordan(3),1)\\
  &    4   &   (  \Jordan(2),1,1) &(z  \Jordan(2),z^{-1}  \Jordan(2))& -(\omega \Jordan(3),1)\\    
 &&&&z^6\neq 3\\ 

 16&    4   &   (  \Jordan(2),1,1) &(z  \Jordan(2),z^{-1}  \Jordan(2))&(x\Jordan(2),1,x^{-2})\\
&    4   &   (  \Jordan(2),1,1) &(z  \Jordan(2),z^{-1}  \Jordan(2))&-(x\Jordan(2),1,x^{-2})\\
 &&&& -zx^{\pm 2} \neq 1,-zx^{\pm 1} \neq 1\\
 17&    4   &   (  \Jordan(2),1,1) &(z  \Jordan(2),z^{-1}  \Jordan(2))&(  \Jordan(2),x,x^{-1})\\
        % &&&&x z^{\pm 1}\neq 1 \\ 
 & 4   &   (  \Jordan(2),1,1) &(z  \Jordan(2),z^{-1}  \Jordan(2))&-(  \Jordan(2),x,x^{-1})\\
 &&&&-x z^{\pm 1}\neq 1 \\ 
 18&    4   &   (  \Jordan(2),1,1) &(z  \Jordan(2),z^{-1}  \Jordan(2))&(x,y,(xy)^{-1},1)\\
 & 4   &   (  \Jordan(2),1,1) &(z  \Jordan(2),z^{-1}  \Jordan(2))&-(x,y,(xy)^{-1},1)\\
   &&&&-z^{\pm 1}\not \in \{ x,y,xy\}   \\
 19&    4   &   (  \Jordan(2),1,1)& (\omega\Jordan(2),\omega^{-1}\Jordan(2))&(  \Jordan(2),-  \Jordan(2))\\
 20&    4   &   (  \Jordan(2),1,1)& (\omega\Jordan(2),\omega^{-1}\Jordan(2))&\pm (i\Jordan(2),1,-1)\\
 21&    4   &   (  \Jordan(2),1,1)& (\omega\Jordan(2),\omega^{-1}\Jordan(2))&\pm (x,-x^{-1},1,-1)\\
22&    4   &   (  \Jordan(2),1,1)& (\omega\Jordan(2),\omega^{-1}\Jordan(2))& \pm \Jordan(4)\\
 % 23&    4   &   (  \Jordan(2),1,1)& (\omega\Jordan(2),\omega^{-1}\Jordan(2))&-(\omega \Jordan(3),1)\\
 23&    4   &   (  \Jordan(2),1,1)& (\omega\Jordan(2),\omega^{-1}\Jordan(2))&(x\Jordan(2),1,x^{-2})\\
&    4   &   (  \Jordan(2),1,1)& (\omega\Jordan(2),\omega^{-1}\Jordan(2))&-(x\Jordan(2),1,x^{-2})\\
  &&&& x^{12}\neq 1\\
 24&    4   &   (  \Jordan(2),1,1)& (\omega\Jordan(2),\omega^{-1}\Jordan(2))&(  \Jordan(2),x,x^{-1}) &\\
&    4   &   (  \Jordan(2),1,1)& (\omega\Jordan(2),\omega^{-1}\Jordan(2))&-(  \Jordan(2),x,x^{-1}) &\\
&&&&x^6\neq 1\\
 25&    4   &   (  \Jordan(2),1,1)& (\omega\Jordan(2),\omega^{-1}\Jordan(2))&(x,y,(xy)^{-1},1)\\
&    4   &   (  \Jordan(2),1,1)& (\omega\Jordan(2),\omega^{-1}\Jordan(2))&-(x,y,(xy)^{-1},1)\\
 &&&&x^6\neq 1, y^6\neq 1 ,(xy)^6\neq 1
\end{array}\]}

\begin{rem}
 These tuples of local monodromies in Jordan form arise from  hypergeometric irreducible
 local systems. This can be checked by \cite{Beukers-Heckman} or the Katz Existence
 algorithm. 
\end{rem}

>From the discussion above we know that there exist at most $2$
orthogonally rigid
local systems having $G_2$-monodromy with the same local monodromies.
We reduce the  monodromy tuple modulo $l$ 
to order to show
that there exists at most $1$ such local system.
Via Prop.~\ref{Propstructconst} we can compute the normalized structure constant $n(\cl(\si_1),\ldots,\cl(\si_{r+1}))$
(the definition is recalled in the Appendix) 
corresponding to the
reduced  monodromy tuple $(\si_1,\dots,\si_{r+1})$
via the generic character table of the group $G_2(q)$.
Using CHEVIE, \cite{CHEVIE}, we obtain in the notation of Chang and Ree
(cf. Rem.~\ref{ChaRee}) the following list.
(Note that the output of CHEVIE comes along with a list of possible exceptions
 depending on the eigenvalues of the conjugacy classes.
 In most of the cases these exceptions correspond to ours obtained from the Scott Formula.
 In the remaining cases one can proceed as follows.
 The characters of $G_2(q)$ fall into finitely many families $\F_j$. 
 Using the character table of $G:=G_2(q)$ in CHEVIE or \cite[Anhang B]{Hiss}, 
 one easily sees that the contribution of most those families 
  \[ 
    \left\vert  \frac{\mid G\mid ^{r-1}}{\prod_i \mid C_G(\si_i) \mid}
     \sum_{\chi \in \F_j} \frac{\prod_i \chi(\si_i)} {\chi(1)^{r-1}}\right\vert \] 
to the normalized
 structure constant is bounded by $c/q,$ where $c$ is constant.
 Thus we finally get that 
 $\lim _k (\lfloor n(\C(q^k))\rfloor ) <2.$)

\[\begin{tabular}{ccc} 

$\begin{array}{|ccc|} 
\hline
n(u_2,k_{2,2},u_6)&=&1\\
n(u_2,k_{2,2},k_{2,3})&=&2-\frac{3}{2q}\\
n(u_2,k_{2,2},k_{24})&=&2-\frac{1}{2q}\\
n(u_2,k_{2,2},k_{3,2})&=&1\\
n(u_2,k_{2,2},k_{3,3,1})&=&0\\  
n(u_2,k_{2,2},k_{3,3,2})&=&0\\  
n(u_2,k_{2,2},h_{1a,1})&=&1\\  
n(u_2,k_{2,2},h_{1b,1})&=&1\\  
n(u_2,k_{2,2},h_{1})&=&1\\  
\hline
\end{array}
\begin{array}{|ccc|} 
\hline
n(u_2,k_{3,1},u_6)&=&1\\
n(u_2,k_{3,1},k_{2,3})&=&1\\
n(u_2,k_{3,1},k_{24})&=&0\\
% n(u_2,k_{3,1},k_{3,2})&=&\frac{2q^2-3q-3}{3q^2}\\
% n(u_2,k_{3,1},k_{3,3,1})&=&2/3\\  
% n(u_2,k_{3,1},k_{3,3,2})&=&2/3\\ 
 n(u_2,k_{3,1},k_{3,2})&<&1\\
n(u_2,k_{3,1},k_{3,3,1})&<&1\\  
n(u_2,k_{3,1},k_{3,3,2})&<&1\\ 
n(u_2,k_{3,1},h_{1a,1})&=&1\\  
n(u_2,k_{3,1},h_{1b,1})&=&1\\ 
n(u_2,k_{3,1},h_{1})&=&1\\  
\hline
\end{array}
\begin{array}{|ccc|} 
\hline
n(u_2,h_{1b},u_6)&=&1\\
n(u_2,h_{1b},k_{2,3})&=&1\\
n(u_2,h_{1b},k_{24})&=&0\\
n(u_2,h_{1b},k_{3,2})&=&1\\
n(u_2,h_{1b},k_{3,3,1})&=&0\\  
n(u_2,h_{1b},k_{3,3,2})&=&0\\  
n(u_2,h_{1b},h_{1a,1})&=&1\\  
n(u_2,h_{1b},h_{1b,1})&=&1\\  
n(u_2,h_{1b},h_{1})&=&1\\  
\hline
\end{array}$
\end{tabular}
% \end{center}
% \end{table}
\]

By Thm.~\ref{nC} we have hence at most one 
orthogonally rigid
local system having $G_2$-monodromy with given local monodromies.

To show the existence, we construct a  differential operator
by translating the middle convolution operations and tensor product operations
 to the level of differential operators, cf. Remark~\ref{apositive}.  
To simplify the construction we rather work with the middle Hadamard product than
with the middle convolution.
Let $L=\vartheta(\vartheta-c)(\vartheta-d)(\vartheta+(c+d))-x(\vartheta+a)^2(\vartheta+1-a)^2$ 
be hypergeometric with Riemann scheme
{\small \begin{eqnarray*} &&{\mathcal{R}}(L)=\left\{\begin{array}{ccc}
                             0 & 1 & \infty \\
                             \hline
                            0&0&a\\
                            c&1&a\\
                            d&1&1-a\\
                           -c-d&2&1-a
                    \end{array}\right\}, \\
\end{eqnarray*}}
and $L_b=\vartheta-x(\vartheta+b),\; b \in \{a,1-a\}$.
Thus we get the formally self adjoint operator
\[ P_1:=L_a\ast_H L_{1-a} \ast_H  \Lambda^2(L), \]
\[\begin{array}{ccl}
 P_1&=&\vartheta \left( \vartheta -d \right)  \left( \vartheta +d \right)  \left( \vartheta -c \right) 
 \left( \vartheta +c \right)  \left( c+d+\vartheta  \right)  \left( -c-d+\vartheta  \right)-\\
 &&x
 \left( 2\,\vartheta +1 \right)  \left( \vartheta +a \right)  \left( \vartheta +1-a \right)\cdot
( {\vartheta }^{4}+2\,{\vartheta }^{3}+
(2-{c}^{2}-{d}^{2}-2\,{a}^{2}+2\,a-cd)\,{\vartheta }^{2}+\\
&&(1-{c}^{2}-{d}^{2}-2\,{a}^{2}+2\,a-cd)\,{\vartheta }-2\,a \left( a-1 \right)  \left( {a}^{2}-a-cd-{d}^{2}-{c}^{2}+1\right) )+ \\
&&
 x^2\left( \vartheta +1 \right)  \left( \vartheta +2-2\,a \right)  \left( \vartheta +1+a \right) 
 \left( \vartheta +a \right)  \left( \vartheta +2-a \right)  \left( \vartheta +1-a \right) 
 \left( \vartheta +2\,a \right) 
\end{array}\]

with Riemann scheme
{\small \[ {\mathcal{R}}(P_1)=\left\{\begin{array}{ccc}
                             0 & 1 & \infty \\
                             \hline
                             0 & 0 & 1 \\
                             c & 1& a\\
                             d& 1& a+1\\
                             c+d &2& 2a \\
                             -c-d&3& 2-2a\\
                             -d&3& 1-a\\
                             -c&4& 2-a
                        \end{array}
                     \right\}.\]}
Specializing the parameters $a,c,d$ suitably we get a differential equation
for all the $P_1$-cases if we can show that it has $G_2$-monodromy.
Using MAPLE one gets that $\Lambda^2(L)$ has degree $14$.
Thus $\Lambda^2$ of the monodromy representation decomposes into
a rank $7$ and a rank $14$ representation.
Hence, by Table 5 in \cite{OnishchikVinberg} the monodromy group is contained in $G_2$.
Thus the orthogonally rigid local systems
with $G_2$-monodromy in the case $P_1$ are uniquely determined by their
local monodromy.

\subsection{The case $P_2$}
In the case $P_2(19,19,9)$  the list of possible Jordan forms of the local monodromies
is
{ \small \[ \begin{array}{c|c|cccc}
  nr.   & \rk  &    & & &\\
\hline
 1   &  7   &   (  \Jordan(2),  \Jordan(2),  \Jordan(3))& (  \Jordan(2),  \Jordan(2),  \Jordan(3)) &(-\Jordan(3),-1,\Jordan(3)) &\\
  2   &  7   &   (  \Jordan(2),  \Jordan(2),  \Jordan(3))& (  \Jordan(2),  \Jordan(2),  \Jordan(3)) & (i\Jordan(2),i^{-1}\Jordan(2),-1,-1,1)\\
 3 &    7   &   (  \Jordan(2),  \Jordan(2),  \Jordan(3)) &(  \Jordan(2),  \Jordan(2),  \Jordan(3)) &(x,-x,-x^{-1},x^{-1},1,-1,-1)
\end{array}\]}

Applying the functor
\[ M_\phi= \MT_{\L(-1,-1)} \circ \MC_{-1} \circ \MT_{\L(-1,-1)} \circ \MC_{-1} ,\]
we obtain in each case an orthogonally rigid local system of rank $5$  with the following tuple of Jordan forms
{ \small \[\begin{array}{c|c|cccc}
  nr.   & \rk  &    & & &\\
\hline
 1   &  5   &   (-  \Jordan(2),-  \Jordan(2),1)& (  \Jordan(2),-  \Jordan(2),1) &(-\Jordan(3),\Jordan(3)) & red.\\
  2   &  5   &   (-  \Jordan(2),-  \Jordan(2),1)& (-  \Jordan(2),-  \Jordan(2),1) & (i\Jordan(2),i^{-1}\Jordan(2),1)\\
 3 &    5   &   (-  \Jordan(2),-  \Jordan(2),1) &(-  \Jordan(2),-  \Jordan(2),1) &(x,-x,-x^{-1},x^{-1},1)
\end{array}\]}
Using the isomorphism 
\[ \La^2 \SP_4 \cong \SO_5 \]
we get
{\small \[  \begin{array}{c|c|cccc}
  nr.   & \rk  &    & & &\\
\hline
  2   &  4   &   (-  \Jordan(2),1,1)& (-  \Jordan(2),1,1) & -(\Jordan(2),i,i^{-1})\\
 3 &    4   &   (-  \Jordan(2),1,1) &(-  \Jordan(2),1,1) &\pm (ix,i,-i,(ix)^{-1})
\end{array}\]}
Applying $\MC_{-1}$ we get an orthogonally rigid local system of rank $3, 4$ resp.,  with the following tuple of Jordan forms
of the local monodromies.

{ \small \[\begin{array}{c|c|cccc}
  nr.   & \rk  &    & & &\\
\hline
  2   &  3   &  \Jordan(3)&\Jordan(3) & (1,i,i^{-1})\\
 3 &    4   &   (  \Jordan(3),1) &(  \Jordan(3),1) &\mp (ix,i,-i,(ix)^{-1})
\end{array}\]}

Via the isomorphisms 
\[ \sym^2 \SP_2 =\SO_3, \quad \SL_2\otimes \SL_2 = \SO_4 \]
we can decompose it into linearly rigid irreducible 
local systems $\L_1$ and $\L_2$ of rank $2$ with the following tuple of Jordan forms.

{ \small \[\begin{array}{c|c|cccc}
  nr.   & \L_i  &    & & &\\
\hline
  2   &  \L_1=\L_2  &  \Jordan(2)&\Jordan(2) & \pm (\zeta_8,\zeta_8^{-1})\\
 3 &    \L_1   &\Jordan(2) &\Jordan(2) & (y,y^{-1}) &y^2=x \\
   &    \L_2   &\Jordan(2) &\Jordan(2) &\pm (iy,i^{-1}y^{-1})
\end{array}\]}

\begin{rem}
The cases $(2)$ and$ (3) $ are 
quadratic pullbacks
of linearly rigid local systems (cf. \cite{DR07})
with  $G_2$-monodromy $(A,B,C), ABC=1, A^2=1,$
via \[  (A,B,C) \mapsto (A^2,B^{A},B,C^2).\]
{ \small\[ \begin{array}{c|c|cccc}
  nr.   & \rk  &    & & &\\
\hline
  2   &  7   & (-1_4,1_3)&  (  \Jordan(2),  \Jordan(2),  \Jordan(3))& (\zeta_8 \Jordan(2),\zeta_8^{-1}\Jordan(2),\zeta_8^2,\zeta_8^{-2},1)\\
 &     & (-1_4,1_3)&  (  \Jordan(2),  \Jordan(2),  \Jordan(3))& (-\zeta_8 \Jordan(2),-\zeta_8^{-1}\Jordan(2),\zeta_8^2,\zeta_8^{-2},1)\\
 3 &    7   & (-1_4, 1_3) &(\Jordan(2),  \Jordan(2),  \Jordan(3)) &(y,iy,iy^{-1},y^{-1},1,i,-i)& y^2=x\\
& & (-1_4,1_3) &(
 \Jordan(2),  \Jordan(2),  \Jordan(3)) &(y,-iy,-iy^{-1},y^{-1},1,i,-i)& y^2=x
\end{array}\]}

This shows the existence of two, four resp., orthogonally rigid local systems with $G_2$ monodromy  and the same
local monodromy in the case (2), (3) resp.. 
\end{rem}

In the case $P_2(19,19,7)$  the list of possible Jordan forms of the local monodromies is as follows:

{ \small \[\begin{array}{c|c|cccc}
  nr.   & \rk  &    & & &\\
\hline
 1   &  7   &   (  \Jordan(2),  \Jordan(2),  \Jordan(3))& (  \Jordan(2),  \Jordan(2),  \Jordan(3)) &\Jordan(7)&\\
  2   &  7   &   (  \Jordan(2),  \Jordan(2),  \Jordan(3))& (  \Jordan(2),  \Jordan(2),  \Jordan(3)) & (\omega \Jordan(3),\omega^{-1}\Jordan(3),1)\\
 3 &    7   &   (  \Jordan(2),  \Jordan(2),  \Jordan(3)) &(  \Jordan(2),
 \Jordan(2),  \Jordan(3)) &(x\Jordan(2),x^{-1 }\Jordan(2),x^2,x^{-2},1)\\
 4 &    7   &   (  \Jordan(2),  \Jordan(2),  \Jordan(3)) &(  \Jordan(2),  \Jordan(2),  \Jordan(3)) &(x\Jordan(2),x^{-1 }\Jordan(2),\Jordan(3)) \\
 5 &    7   &   (  \Jordan(2),  \Jordan(2),  \Jordan(3)) &(  \Jordan(2),  \Jordan(2),  \Jordan(3)) &(x,y,xy,1,(xy)^{-1},y^{-1},x^{-1})
\end{array}\]}

The corresponding normalized structure constant of the reduced monodromy tuple
can again be computed via Lemma~\ref{Propstructconst} and CHEVIE, cf. \cite{CHEVIE}: 
\[ \begin{array}{clcccc}
 1) &    \n(u_2,u_2,u_6) = (3q-2)/q & \\
 2) &\n(u_2,u_2,k_{3,2}) = \frac{4q-1}{3q},\quad   \n(u_2,u_2,k_{3,3,i}) =\frac{q-1}{3q}, \quad i=1,2 \\ 
3)&\n(u_2,u_2,h_{1a,1}) = \left\{\begin{array}{cc} 
                             3(q-1)/q, & h_{1a,1}=h_{1a,1}'^2 \\
                              (q-1)/q, & h_{1a,1} \neq h_{1a,1}'^2 
                         \end{array}\right.\\
4)&\n(u_2,u_2,h_{1b,1}) = \left\{\begin{array}{cc} 
                             (3q-1)/q, & h_{1b,1}=h_{1b,1}'^2 \\
                              (q-1)/q, & h_{1b,1} \neq h_{1b,1}'^2 
                         \end{array}\right. \\ 
5)& \n(u_2,u_2,h_{1})  = \left\{\begin{array}{cc} 
                             4 & h_{1}=h_{1}'^2 \\
                              0 & h_{1} \neq h_{1}'^2 
                         \end{array}\right.   
\end{array}\]

Linearly rigid irreducible triples $(A,B,C), A^2=1,$ in the case $P_2(25,19,7)$  yield
triples in the case $P_2$ via the quadratic pullback
\[  (A,B,C) \mapsto (B^A,B,C^2).\]
It follows from the linear rigidity and \cite{DR07} that
\[ \begin{array}{clcc}
    & \n(\cl(A),\cl(B),\cl(C)) &\mbox{condition}& \#\\
\hline
 1) &  \n(k_2,u_2,u_{6})=1 &&1 \\
 2) &\n(u_2,u_2,k_{3,2})=1 & & 1\\ 
3)&\n(k_2,u_2,h_{1a,1}')=1 & h_{1a,1}'^2= h_{1a,1}&2\\ 
4)&\n(k_2,u_2,h_{1b,1}')=1 & h_{1b,1}'^2= h_{b,1}&2 \\ 
5)& \n(k_2,u_2,h_{1}')=1 &h_{1}'^2=h_1&4\\ 
\end{array}\]

For an irreducible tuple $(g_1,g_2,g_3)$ in 1)
being no pullback we get for the braided tuple
\[ (g_2,g_1^{g_2},g_3)  \not \sim (g_1,g_2,g_3). \]
Otherwise there exists an element $h \in G_2$ such that
\[ {g_1^h=g_2}, \quad g_2^h =g_1^{g_2}, \quad g_3^h=g_3. \]
Thus
\[ (g_1,g_2,g_3)^{hg_2^{-1}}=(g_2,g_1,g_3^{hg_2^{-1}})\]
and
\[ (hg_2^{-1})^2 \in Z(G_2)=1. \]
This gives $h^2 =g_1g_2.$
Hence the pullback of
$ (g_1,hg_2^{-1},(g_1 h g_2^{-1})^{-1})$
is
\begin{eqnarray*}
(g_1,g_1^{(hg_2^{-1})^{-1}},(hg_2^{-1})^{2},(g_1hg_2^{-1})^{-2})&=&(g_1,g_2,1,(g_1hg_2^{-1})^{-2})=\\
(g_1,g_2,h^{-2})&=&(g_1,g_2,g_3).
\end{eqnarray*}

Thus in the case $(1)$ there is only one irreducible triple and
all irreducible tuples in the case $P_2(19,19,7)$ arise from pullbacks of
linearly rigid irreducible triples. 

For completeness we write down
the differential operators with  linearly rigid $G_2$-monodromy tuple. 
For this we
translate the construction from \cite{DR07} to the level of differential operators
and obtain 
formally selfdual operators $L$ with Riemann-scheme,
where
{\small
\[\begin{array}{ccl}
L&:=
&8\, \left( \vartheta-1 \right)  \left( \vartheta-2 \right)  \left( \vartheta-3 \right) 
 \left( 2\,\vartheta-1 \right)  \left( 2\,\vartheta-3 \right)  \left( 2\,\vartheta-5 \right) 
 \left( 2\,\vartheta-7 \right)\\ 
&& -8 x\, \left( 2\,\vartheta-5 \right)  \left( \vartheta-1 \right)  \left( \vartheta-2 \right) 
 \left( 2\,\vartheta-1 \right)  \left( 2\,\vartheta-3 \right)  \left( 8\,{\vartheta}^{2}-24\,\vartheta
+25-4(p^2+q^2+pq) \right) \\
&&+2 x^2  (\vartheta-1) (2 \vartheta-1) (2 \vartheta-3)
 (96 \vartheta^4-384 \vartheta^3+(720-96\,({q}^{2}+p^2+pq)) \vartheta^2+\\
&&(192\,({q}^{2}+p^2+pq)-672) \vartheta+
141+8\, \left( {p}^{2}-1+qp+{q}^{2} \right)  \left( 2\,{p}^{2}-15+2\,q
p+2\,{q}^{2} \right) )+\\
&& x^3 (2 \vartheta-1)  ( -256 \vartheta^6+768 \vartheta^5+(-1312+384\,({p}^{2}+{q}^{2}+qp)) \vartheta^4+\\
&&           (1344-768\,({p}^{2}+{q}^{2}+qp)) \vartheta^3-
 (160+32\, \left( 4\,({p}^{2}+q^2+pq)-21 \right)  \left( {p}^{2
}-1+qp+{q}^{2} \right)) \vartheta^2\\
&&+(32\, \left( 4\,({p}^{2}+q^2+pq)-9 \right)  \left( {p}^{2}-1+qp
+{q}^{2} \right)) \vartheta+\\
&& (64\,{p}^{2}{q}^{2} \left( {q}^{2}+2\,qp+{p}^{2} \right)-3) -8\, \left( 6
\,({p}^{2}+q^2+pq)-5 \right)  \left( {p}^{2}-1+qp+{q}^{2}
 \right) )+\\
&& 128 x^4  \vartheta (\vartheta - q) (\vartheta + q) (\vartheta - p) (\vartheta + p) (\vartheta+p + q) (\vartheta-p - q),
\end{array}\]
\[ {\mathcal R}(L)=\left\{\begin{array}{ccc}
                0 & 1 &\infty\\
         \hline
               1/2& 0& 0\\
             1& 0&   q\\
                3/2&1&  p\\
                2&1&   p + q\\
                5/2&1&  - p - q\\
                3& 2&  - q \\
                7/2& 2& - p
\end{array}\right\}. \]}

\subsection{The $P_3$ case}
In the case $P_3$  the list of possible Jordan forms of the local monodromies is as follows.

{\small \[\begin{array}{c|c|cccc}
  nr.   & \rk  &    & & &\\
\hline
 1   &  7   &   (  \Jordan(2),  \Jordan(2),  \Jordan(3))& (\Jordan(3),\Jordan(3),1) &(i,i,-1,1,i^{-1},i^{-1},-1) &\\
  2   &  7   &   (  \Jordan(2),  \Jordan(2),  \Jordan(3))& (\Jordan(3),\Jordan(3),1) & (-\Jordan(2),-\Jordan(2),\Jordan(3))\\
 3 &    7   &   (  \Jordan(2),  \Jordan(2),  \Jordan(3)) & (\Jordan(3),\Jordan(3),1)&(x,x,x^2,1,x^{-1},x^{-1},x^{-2})&\\
 4&    7   &   (  \Jordan(2),  \Jordan(2),  \Jordan(3)) & (\Jordan(3),\Jordan(3),1)&(\omega \Jordan(2),\omega^{-1}\Jordan(2),\omega,\omega^{-1},1)&\\

 5&    7   &   (  \Jordan(2),  \Jordan(2),  \Jordan(3)) &(-\Jordan(2),-\Jordan(2),1,1,1)&(i,i,-1,1,i^{-1},i^{-1},-1)\\
 6&    7   &   (  \Jordan(2),  \Jordan(2),  \Jordan(3)) &(-\Jordan(2),-\Jordan(2),1,1,1)&(-\Jordan(2),-\Jordan(2),\Jordan(3))\\
 7&    7   &   (  \Jordan(2),  \Jordan(2),  \Jordan(3)) &(-\Jordan(2),-\Jordan(2),1,1,1)&(x,x,x^2,1,x^{-1},x^{-1},x^{-2})&\\
 8&    7   &   (  \Jordan(2),  \Jordan(2),  \Jordan(3)) &(-\Jordan(2),-\Jordan(2),1,1,1)&(\omega \Jordan(2),\omega^{-1}\Jordan(2),\omega,\omega^{-1},1)&\\

 9&    7   &   (  \Jordan(2),  \Jordan(2),  \Jordan(3)) &(z,z,z^{-1},z^{-1},1,1,1)&(i,i,-1,1,i^{-1},i^{-1},-1)&\\
 10&    7   &   (  \Jordan(2),  \Jordan(2),  \Jordan(3)) &(z,z,z^{-1},z^{-1},1,1,1)&(-\Jordan(2),-\Jordan(2),\Jordan(3))&\\
 11&    7   &   (  \Jordan(2),  \Jordan(2),  \Jordan(3)) &(z,z,z^{-1},z^{-1},1,1,1)&(x,x,x^2,1,x^{-1},x^{-1},x^{-2})&\\
 12&    7   &   (  \Jordan(2),  \Jordan(2),  \Jordan(3)) &(z,z,z^{-1},z^{-1},1,1,1)&(\omega \Jordan(2),\omega^{-1}\Jordan(2),\omega,\omega^{-1},1)&\\
\end{array}\]}

Applying the functor
\[ M_\phi= \MT_{\L(\phi^{-1},1)} \circ \MC_\phi \circ \MT_{\L(\phi,1)} \circ \MC_{\phi^{-1}} ,\]
where 
\[ \phi=\left\{ \begin{array}{ccc}
                    i & 1),5),9) \\
                     -1&2),6),10)\\
                     x & 3),7),11)\\
                    \omega& 4),8),12)
                \end{array} \right.,\]
we obtain an orthogonally rigid local system of rank $5$  with the following tuple of Jordan forms of the local monodromies.
The contradiction of rank being $5$ and having Jordan form of type $ (\Jordan(3),\Jordan(3))$
in the cases (1)-(4) shows their nonexistence (reducibility).
 
{ \small\[ \begin{array}{c|c|cccc}
  nr.   & \rk  &    & & & \\
\hline
 1   &  5   &   (i,i,1,-i,-i)& (\Jordan(3),\Jordan(3)) &(  \Jordan(3),-1,-1) & red.\\
  2   &  5   &  (1,  -\Jordan(2),  -\Jordan(2)) & (\Jordan(3),\Jordan(3)) & \Jordan(5)& red.\\
 3 &    5   &  (x,x,1,x^{-1},x^{-1}) & (\Jordan(3),\Jordan(3))&(x^2,  \Jordan(3),x^{-2})& red. \\
 4&    5   &    (\omega,\omega,1,\omega^{-1},\omega^{-1}) & (\Jordan(3),\Jordan(3))&(\omega, \Jordan(3),\omega^{-1})& red.\\
\end{array}\]
\[ \begin{array}{c|c|cccc}
 5&    5   &   (i,i,1,-i,-i) &(-\Jordan(2),-\Jordan(2),1)&(  \Jordan(3),-1,-1)\\
 6&    5   &   (1,  -\Jordan(2),  -\Jordan(2)) &(-\Jordan(2),-\Jordan(2),1)&\Jordan(5)&\\
 7&    5   &  (x,x,1,x^{-1},x^{-1}) &(-\Jordan(2),-\Jordan(2),1)&(x^2,  \Jordan(3),x^{-2})&\\
 8&    5   &     (\omega,\omega,1,\omega^{-1},\omega^{-1}) &(-\Jordan(2),-\Jordan(2),1)&(\omega, \Jordan(3),\omega^{-1})&\\

 9&    5   &   (i,i,1,-i,-i) &(z,z,z^{-1},z^{-1},1)&(  \Jordan(3),-1,-1)&z^4\neq 1\\
 10&    5   &   (1,  -\Jordan(2),  -\Jordan(2)) &(z,z,z^{-1},z^{-1},1)&\Jordan(5)&\\
 11&    5   &   (x,x,1,x^{-1},x^{-1}) &(z,z,z^{-1},z^{-1},1)&(x^2,  \Jordan(3),x^{-2})&\\
 12&    5   &   (\omega,\omega,1,\omega^{-1},\omega^{-1}) &(z,z,z^{-1},z^{-1},1)&(\omega, \Jordan(3),\omega^{-1})&\\
\end{array}\]}

Using the isomorphism $ \La^2 \SP_4 \cong \SO_5 $ and the Scott Formula
we get
{\small \[ \begin{array}{c|c|cccc}
  nr.   & \rk  &    & & &\\
\hline
 5&    4   &   (i,1,1,-i) &  (-  \Jordan(2),1,1)  &(i  \Jordan(2),-i  \Jordan(2))\\
 6&    4   &   (-  \Jordan(2),1,1) &  (-  \Jordan(2),1,1)&-\Jordan(4)&\\
 7&    4   &  (x,1,1,x^{-1}) &  (-  \Jordan(2),1,1)&\pm (x\Jordan(2),x^{-1}  \Jordan(2))&\\
 8&    4   &     (\omega,1,1,\omega^{-1}) & (-  \Jordan(2),1,1)&\pm (\omega \Jordan(2),\omega^{-1}  \Jordan(2))&\\

 9&    4   &   (i,1,1,-i) &(z,1,1,z^{-1})&(i  \Jordan(2),-i  \Jordan(2)&\\
 10&    4   &   (-  \Jordan(2),1,1) &(z,1,1,z^{-1})&-\Jordan(4)&\\
 11&    4   &   (x,1,1,x^{-1}) &(z,1,1,z^{-1})&\pm (x  \Jordan(2),x^{-1}  \Jordan(2))&\\
 12&    4   &   (\omega,1,1,\omega^{-1}) &(z,1,1,z^{-1})&\pm (\omega \Jordan(2),\omega^{-1}  \Jordan(2))&\\
\end{array}\]}

Applying $\MC_{-1}$ we  obtain an orthogonally rigid local system of rank $3$ or $4$  with the following tuple of Jordan forms
of the local monodromies.
{ \small \[\begin{array}{c|c|cccc}
  nr.   & \rk  &    & & &\\
\hline

 5&    4   &   (i,1,1,-i) &  (  \Jordan(3),1)  &(i  \Jordan(2),-i  \Jordan(2))\\
 6&    3   &   \Jordan(3) &   \Jordan(3)&\Jordan(3)&\\
 7&    4   &  (-x,1,1,-x^{-1}) &  (  \Jordan(3),1)&\mp (x\Jordan(2),x^{-1}  \Jordan(2))&\\
 8&    4   &     (-\omega,1,1,-\omega^{-1}) & (  \Jordan(3),1)&\mp (\omega \Jordan(2),\omega^{-1}  \Jordan(2))&\\

 9&    4   &   (i,1,1,-i) &(-z,1,1,-z^{-1})&(i  \Jordan(2),-i  \Jordan(2)&\\
 10&    3   &     \Jordan(3) &(-z,1,-z^{-1})&\Jordan(3)&\\
 11&    4   &   (-x,1,1,-x^{-1}) &(-z,1,1,-z^{-1})&\mp (x  \Jordan(2),x^{-1}  \Jordan(2))&\\
 12&    4   &   (-\omega,1,1,-\omega^{-1}) &(-z,1,1,-z^{-1})&\mp (\omega \Jordan(2),\omega^{-1}  \Jordan(2))&\\
\end{array}\]}

Via the isomorphisms 
$ \sym^2 \SP_2 \cong \SO_3$ and $\SL_2\otimes \SL_2 \cong \SO_4 $
we can decompose it into linearly rigid irreducible 
local systems $\L_1$ and $\L_2$ of rank $2$ with the following tuple of Jordan forms.
The conditions for a product of eigenvalues not being $1$ is due to the irreducibility condition from
the Scott Formula.

{\small \[ \begin{array}{c|c|ccccc}
  nr.   & \L_i  &    & & & \\
\hline

 5&  \L_1   &   (\zeta_8,\zeta_8^{-1}) & \Jordan(2)  &\pm\Jordan(2) \\
  &  \L_2  & (\zeta_8,\zeta_8^{-1}) &  \Jordan(2)&     (i,-i)\\

 6&  \L_1=\L_2   &    \Jordan(2) &    \Jordan(2)&-\Jordan(2)&\\
 7&  \L_1    &  (\tilde{x},\tilde{x}^{-1})  &  \Jordan(2) & \Jordan(2) &\tilde{x}^2=-x\\
  &  \L_2     &(\tilde{x},\tilde{x}^{-1}) &  \Jordan(2)&   \pm  (x,x^{-1})&\tilde{x}^2=-x\\
8&  \L_1     &  (\zeta_{12},\zeta_{12}^{-1})    &\Jordan(2)  &   \pm \Jordan(2) &\\
 &\L_2& (\zeta_{12},\zeta_{12}^{-1})&\Jordan(2)& \pm  (\omega,\omega^{-1})&\\
9& \L_1   &    (\zeta_8,\zeta_8^{-1})  &(\tilde{z},\tilde{z}^{-1}) & \Jordan(2) & \tilde{z}^2=-z & \zeta_8^{\pm 1}\tilde{z}^{\pm 1}\neq 1  \\
& \L_2   &  (\zeta_8,\zeta_8^{-1}) &(\tilde{z},\tilde{z}^{-1})&  (i,-i)& \tilde{z}^2=-z&  \zeta_8^{\pm 1}\tilde{z}^{\pm 1} i^{\pm 1}\neq 1\\
 10& \L_1=\L_2   &     \Jordan(2) &(\tilde{z},\tilde{z}^{-1})&  \Jordan(2)& \tilde{z}^2=-z\\
 11&  \L_1   &  (\tilde{x},\tilde{x}^{-1}) & (\tilde{z},\tilde{z}^{-1}) & \Jordan(2) &\tilde{x}^2=-x, \tilde{z}^2=-z & \tilde{x}^{\pm 1}\tilde{z}^{\pm 1} \neq 1 \\
 &  \L_2   & (\tilde{x},\tilde{x}^{-1}) & (\tilde{z},\tilde{z}^{-1}) &(x,x^{-1})&\tilde{x}^2=-x, \tilde{z}^2=-z& \tilde{x}^{\pm 1}\tilde{z}^{\pm 1} x^{\pm 1}\neq 1 \\
 12&  \L_1    &   (\zeta_{12},\zeta_{12}^{-1})  &(\tilde{z},\tilde{z}^{-1}) &  \Jordan(2) & \tilde{z}^2=-z& \zeta_{12}^{\pm 1}\tilde{z}^{\pm 1}\neq 1 \\
 &\L_2&(\zeta_{12},\zeta_{12}^{-1})  &(\tilde{z},\tilde{z}^{-1}) &  (\omega,\omega^{-1}) & \tilde{z}^2=-z& \zeta_{12}^{\pm 1}\tilde{z}^{\pm 1} \omega^{\pm 1}\neq 1 
\end{array}\]}

Thus there exist at most $4$
 orthogonally rigid
local systems having $G_2$-monodromy with the same local monodromies.
Computing the normalized structure constant of the reduced monodromy tuple
 we show
that there exist at most $2$ such local systems.

$$ \begin{array}{|ccc|} 
\hline
n(u_2,k_{2,1},k_{2,2})&=&\frac{3q-1}{q}\\
n(u_2,k_{2,1},k_{3,1})&=&2\\
n(u_2,k_{2,1},h_{1b})&=&\left\{ \begin{array}{cc}
                          2 & o(h_{1b}) \mid (q-1)/2\\
                          0 & o(h_{1b}) \nmid (q-1)/2
                       \end{array}\right.\\
n(u_2,h_{1a},k_{2,2})&=&\left\{ \begin{array}{cc}
                          2 & o(h_{1a}) \mid (q-1)/2\\
                          0 & o(h_{1a}) \nmid (q-1)/2
                       \end{array}\right.\\
n(u_2,h_{1a},k_{3,1})&=&\left\{ \begin{array}{cc}
                          2 & o(h_{1a}) \mid (q-1)/2\\
                          0 & o(h_{1a}) \nmid (q-1)/2
                       \end{array}\right.\\
n(u_2,h_{1a},h_{1b})&=&\left\{ \begin{array}{cc}
                          2 & o(h_{1a}) \mid (q-1)/2, \;o(h_{1b}) \mid (q-1)/2\\
                          0 & o(h_{1b}) \nmid (q-1)/2
                       \end{array}\right.\\

\hline
\end{array}$$

Replacing $q$ by $q^2$ we see that there are at most $2$ such local systems in the case $P_3$
with the same local monodromies.
The existence follows from the construction of the corresponding
differential operators. 
Let
\[ P_3:=L_{2c+1/2}*_H L_{-2c+1/2} *_H   (L_3 \ten (\Lambda^2(L_{1/2}*_H  (L_0 \ten(L_1\ten L_2)))))        \]         
where
\[ {\mathcal{R}}(L_3)=\left\{\begin{array}{ccc}
                             0 & 1 & \infty \\
  \hline
                             1 & 0 & -1 
\end{array}
                     \right\},\quad {\mathcal{R}}(L_{\alpha})=\left\{\begin{array}{ccc}
                             0 & 1 & \infty \\
  \hline
                             0 & -\alpha & \alpha 
\end{array}
                     \right\}, \alpha \in \{1/2,\pm 2c+1/2\},\]
\[ L_0=\vartheta-1/2, \quad {\mathcal{R}}(L_0)=\left\{\begin{array}{ccc}
                             0 & 1 & \infty \\
  \hline
                             1/2 & 0 & -1/2 
\end{array}
                     \right\}\]
and
\[\begin{array}{ccl}
 L_1&:=&4\, \left( \vartheta-c \right)  \left( \vartheta+c \right) +z \left( -
8\,{\vartheta}^{2}-4\,\vartheta-1-4\,{b}^{2}+4\,{c}^{2} \right) +{z}^{
2} \left( 2\,\vartheta+1 \right) ^{2}\\
L_2&:=&4\, \left( \vartheta-c \right)  \left( \vartheta+c \right) +z \left( -
8\,{\vartheta}^{2}-4\,\vartheta-1-4\,{b}^{2}+20\,{c}^{2} \right) +{z}^
{2} \left( 2\,\vartheta+1+4\,c \right)  \left( 2\,\vartheta+1-4\,c
 \right) 
\end{array}\]
with
{\small \[ {\mathcal{R}}(L_1)=\left\{\begin{array}{ccc}
                             0 & 1 & \infty \\
                             \hline
                             c & b & 1/2 \\
                             -c &-b& 1/2
              \end{array}
                     \right\},\quad  {\mathcal{R}}(L_2)=\left\{\begin{array}{ccc}
                             0 & 1 & \infty \\
                             \hline
                             c & b & 2c+1/2 \\
                             -c &-b& -2c+1/2
              \end{array}
                     \right\}.\]}

Then
\[\begin{array}{ccl}
P_{3} &=&16\,{\vartheta}^{2} \left( \vartheta-2 \right) ^{2} \left( \vartheta-1
 \right) ^{3}-\\
&&8\,x{\vartheta}^{2} \left( 2\,\vartheta-1 \right) 
 \left( \vartheta-1 \right) ^{2} \left( 4\,{\vartheta}^{2}-4\,
\vartheta+8\,{b}^{2}+5-24\,{c}^{2} \right) +\\
&&4\,{x}^{2}{\vartheta}^{3}
 ( 24\,{\vartheta}^{4}+(38-288 {c
}^{2}+64\,{b}^{2}){\vartheta}^{2}+16\,{b}^{2}+64\,{b}^{4}-144\,{c}^{2}+
7+576\,{c}^{4}-384\,{c}^{2}{b}^{2}) -\\
&&2\,{x}^{3} ( 2\,
\vartheta+1 )  \left( 2\,\vartheta+1+4\,c \right)  \left( 2\,
\vartheta+1-4\,c \right)  ( 4\,{\vartheta}^{4}+8\,{\vartheta}^{3}
+11\,{\vartheta}^{2}+8\,{\vartheta}^{2}{b}^{2}-56\,{\vartheta}^{2}{c}^
{2}+\\
&&7\,\vartheta-56\,\vartheta\,{c}^{2}+8\,\vartheta\,{b}^{2}+64\,{c}^
{4}+2+4\,{b}^{2}-36\,{c}^{2}-64\,{c}^{2}{b}^{2}) +\\
&&{x}^{4}
 \left( \vartheta+1 \right)  \left( 2\,\vartheta+3-4\,c \right) 
 \left( 2\,\vartheta+1-4\,c \right)  \left( \vartheta+1-4\,c \right) 
 \left( \vartheta+1+4\,c \right)\cdot \\
&& \left( 2\,\vartheta+3+4\,c \right) 
 \left( 2\,\vartheta+1+4\,c \right) 
\end{array}\]
with
{\small \[ {\mathcal{R}}(P_{3})=\left\{\begin{array}{ccc}
                             0 & 1 & \infty \\
                             \hline
                             0 & 0 & 1 \\
                             0 & 1& 2c+1/2\\
                             1& 2& 2c+3/2\\
                             1 &1/2+2b& 4c+1 \\
                             1& 3/2+2b&-4c+1\\
                             2& 3/2-2b&-2c+3/2\\
                             2& 1/2-2b&-2c+1/2
                        \end{array}
                     \right\}.\]}
 
Thus if we replace $b$ by $b+1/2$ (or $c$ by $c+1/2$) in the construction
we get the same local monodromies for $P_3(b)$ and $P_3(b+1/2)$.
However if $L_1$ is reducible, i.e. if $\pm b \pm c+1/2 \in \ZZ$
or $L_1(b) \sim L_1(b+1/2)$, i.e.  $2b\in 1/2+\ZZ$,  then there is only one
$P_3$ with the given local monodromies.

Since $\Lambda^2$ yields an operator of degree $14$
we get that $P_3$ has  $G_2$-monodromy.

\subsection{The case $P_5$}
We start with the possible list of Jordan forms of the local monodromies of orthogonally rigid quadruples with
 $G_2$-monodromy.

{ \small \[\begin{array}{c|c|ccccc}
  nr.   & \rk  &    & & &\\
\hline
1    &  7   &   (  \Jordan(2),  \Jordan(2),1_3)&  (  \Jordan(2),  \Jordan(2),1_3)&  (\omega 1_3,\omega^{-1}1_3,1) & (\Jordan(3),\Jordan(3),1)&\\
 2   &  7   &   (  \Jordan(2),  \Jordan(2),1_3)&  (  \Jordan(2),  \Jordan(2),1_3)&  (\omega 1_3,\omega^{-1}1_3,1) & (-\Jordan(2),-\Jordan(2),1_3)&\\
3    &  7   &   (  \Jordan(2),  \Jordan(2),1_3)&  (  \Jordan(2),  \Jordan(2),1_3)&  (\omega 1_3,\omega^{-1}1_3,1) & (x1_2,x^{-1}1_2,1_3)&\\
     &&&&&x^3\neq 1
\end{array}\]}
Applying the functor
\[ M_\phi=\MT_{\L(1,1,\phi^{-1})} \circ  \MC_{\phi \omega^{-1}} \circ \MT_{\L(1,1,\phi \omega)} \circ \MC_{\phi^{-1} \omega} \circ \MT_{\L(1,1,\omega^{-1})},\]
where 
\[ \phi=\left\{ \begin{array}{ccc}
                    1 & 1) \\
                     -1&2)\\
                    x & 3)\\
                    \end{array} \right.,\]
we obtain an orthogonally rigid local system of rank $4$ or $5$ with the following tuple of Jordan forms of the
local monodromies.

{\small \[\begin{array}{c|c|cccccc}
  nr.   & \rk  &    & & &\\
\hline
1    &  4   &   (  \Jordan(2),  \Jordan(2))&  (  \Jordan(2),  \Jordan(2))&  (\Jordan(3),1) & (\omega,\omega^{-1},1_2)&\\
2    &  5   &   (  \Jordan(2),  \Jordan(2),1)&  (  \Jordan(2),  \Jordan(2),1)& (  -\Jordan(2),  -\Jordan(2),1)  &(\omega,\omega^{-1},1_3) &\\
3    &  5   &   (  \Jordan(2),  \Jordan(2),1)&  (  \Jordan(2),  \Jordan(2),1)&  (x 1_2,x^{-1}1_2,1) & (\omega,\omega^{-1},1_3)&\\
\end{array}\]}

Applying the functor
\[ M_\phi=\MT_{\L(1,\phi^{-1},1)}\circ \MC_{\phi} \circ \MT_{\L(1,\phi,\phi^{-1})} \circ \MC_{\phi^{-1}} \circ  \MT_{\L(1,1,\phi)},\]
where 
\[ \phi=\left\{ \begin{array}{ccc}
                    1 & 1) \\
                     -1&2)\\
                    x & 3)\\
                                   \end{array} \right.,\]
we obtain an orthogonally rigid local system of rank $4$  with the following tuple of Jordan forms of the
local monodromies.

{\small \[\begin{array}{c|c|ccccc}
  nr.   & \rk  &    & & &\\
\hline
  1  &  4   &   (  \Jordan(2),  \Jordan(2))&  (  \Jordan(2),  \Jordan(2))&  (\Jordan(3),1) & (\omega,\omega^{-1},1,1)&\\
 2   &  4   &   (  \Jordan(2),  \Jordan(2))&  (  -\Jordan(2),  -\Jordan(2))& ( \Jordan(3),1)  &(\omega,\omega^{-1},1,1) &\\
  3  &  4   &   (  \Jordan(2),  \Jordan(2))&  (x1_2, x^{-1}1_2)& ( \Jordan(3),1)   & (\omega,\omega^{-1},1,1)&
\end{array}\]}

Via the isomorphism 
\[ \SL_2\otimes \SL_2 = \SO_4 \]
we can decompose it into linearly rigid irreducible 
local systems $\L_1$ and $\L_2$ of rank $2$ with the following tuple of Jordan forms of the
local monodromies.

{ \small \[\begin{array}{c|cccc|ccccc}
  nr.   & \L_1  &    & & &\L_2\\
\hline
  1  &    \Jordan(2) &  1_2  &   \Jordan(2)  &  (\omega,\omega^{-1})&    1_2 &  \Jordan(2)&  \Jordan(2) & (\omega,\omega^{-1})\\
     &    \Jordan(2) &  1_2  &   \Jordan(2)  &  -(\omega,\omega^{-1})&   1_2 &  \Jordan(2)&  \Jordan(2) & -(\omega,\omega^{-1})\\
 2  &    \Jordan(2) &  1_2  &   \Jordan(2)  &   (\omega,\omega^{-1})&    1_2 &  -\Jordan(2)&  \Jordan(2) & (\omega,\omega^{-1})\\
     &    \Jordan(2) &  1_2  &   \Jordan(2)  &  -(\omega,\omega^{-1})&   1_2 &  -\Jordan(2)&  \Jordan(2) & -(\omega,\omega^{-1})\\
3  &    \Jordan(2) &  1_2  &   \Jordan(2)  &  (\omega,\omega^{-1})&   1_2 &  (x,x^{-1})&  \Jordan(2) & (\omega,\omega^{-1})\\
     &    \Jordan(2) &  1_2  &   \Jordan(2)  &  -(\omega,\omega^{-1})&   1_2 & (x,x^{-1})&  \Jordan(2) & -(\omega,\omega^{-1})&x^6\neq 1
\end{array}\] } %This shows the geometric origin of those quadruples.

>From the discussion above we know that there exist at most $2$
 orthogonally rigid
local systems having $G_2$-monodromy with the same local monodromies.
The generic character table of the group $G_2(q)$ shows
that there exists at most $1$ such local system.
The normalized structure constant of the reduced monodromy tuple
gives 
\[\begin{array}{|cccccccc|} 
\hline
n(u_2,u_2,k_3,u_3)&=&1&n(u_2,u_2,k_3,u_4)&=&n(u_2,u_2,k_3,u_5)&=&0\\
n(u_2,u_2,k_3,k_{2,1})&=&1 &&&n(u_2,u_2,k_3,h_{1a})&=&1\\
\hline
\end{array}\]

There are infinitely many such quadruples due to the positions $(1,t,0,\infty) $ of the singularities.
 Those with singularities at $1,-1,0,\infty$ arise from quadratic pullbacks of 
 $P_1$-cases with the following tuples of Jordan forms.
{\small  \[\begin{array}{c|ccc}
 nr.   &   &    &  \\
\hline
1&  (  \Jordan(2),  \Jordan(2),1_3)& (-\omega 1_2,-\omega^{-1}1_2,\omega,\omega^{-1},1)& (\Jordan(3),-\Jordan(3),-1)\\
2& (  \Jordan(2),  \Jordan(2),1_3)& (-\omega 1_2,-\omega^{-1}1_2,\omega,\omega^{-1},1)& (i\Jordan(2),-i\Jordan(2),-1,-1,1)\\
3& (  \Jordan(2),  \Jordan(2),1_3)& (-\omega
1_2,-\omega^{-1}1_2,\omega,\omega^{-1},1)& (y,-y^{-1},-1,1,-1,-y,y^{-1})\\
&&&y^2=x\\
\end{array}\]}
Therefore all quadruples exist independent of their singular locus 
since shifting the position $-1$ to $t$ does not effect the group properties.
Thus the orthogonally rigid local systems
with $G_2$-monodromy in the case $P_5$ are uniquely determined by their
local monodromy and singular locus.\\

This finishes the proof of Thm~\ref{mainthm}. \Endproof

\appendix
\section{Generic character tables and structure constants}\label{Char}

 Let $\C=(C_1,\ldots,C_{r+1})$ be a 
 tuple of  conjugacy classes  of a group $G$
 and 
 \[ \Sigma(\C)=\{ \si \in G^{r+1} \mid \si_i \in C_i, \si_1 \cdots \si_{r+1}=1\}.\]
 Then the {\it normalized structure constant} $n(\C)$ is defined as 
 \[ n(\C)=\frac{\mid \Sigma(\C) \mid}{\mid Inn(G)\mid}. \]
The following result is well known (cf.~\cite[Chap. I, Thm.~5.8]{MalleMatzat}):

\begin{prop}\label{Propstructconst} Let $G$ be a finite group,  let 
$\Irr(G)$ denote the set of irreducible characters  of $G$ and let 
$\C=(C_1,\ldots,C_{r+1})$ be a tuple of conjugacy classes of $G$ with representatives
$\si_1,\ldots,\si_{r+1}.$  Then 
 \[ n(\C) =
    \frac{\mid Z(G) \mid \cdot \mid G\mid ^{r-1}}{\prod_i \mid C_G(\si_i) \mid}
     \sum_{\chi \in \Irr(G)} \frac{\prod_i \chi(\si_i)} {\chi(1)^{r-1}}. \] 
\end{prop}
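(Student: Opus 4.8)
The plan is to run the classical class-algebra computation of Frobenius, which expresses $|\Sigma(\C)|$ through the irreducible characters of $G$, and then to divide by $|Inn(G)| = |G|/|Z(G)|$ to obtain $n(\C)$. Throughout I work in the group algebra $\CC[G]$ and use the class sums and the primitive central idempotents; no input beyond elementary character theory is needed, which is why the statement is attributed as well known.

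First I would reduce to a single class-algebra coefficient. In a tuple $\si\in\Sigma(\C)$ the last entry is forced, $\si_{r+1} = (\si_1\cdots\si_r)^{-1}$, so
\[
|\Sigma(\C)| = |C_{r+1}|\cdot a, \qquad a := \#\{(\si_1,\dots,\si_r)\in C_1\times\cdots\times C_r : \si_1\cdots\si_r = g_0\},
\]
for any fixed $g_0\in C_{r+1}^{-1}$; here $a$ is independent of the chosen $g_0$ inside its class (by centrality, see below) and $|C_{r+1}^{-1}| = |C_{r+1}|$. Writing $\widehat{C_i} := \sum_{g\in C_i} g \in \CC[G]$, the product $\widehat{C_1}\cdots\widehat{C_r}$ lies in the centre $Z(\CC[G])$, hence is a $\CC$-combination of class sums, and by construction the coefficient of $\widehat{C_{r+1}^{-1}}$ in that expansion equals $a$.

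Next I would extract this coefficient via the central characters. Each $\widehat{C_i}$ acts on the irreducible $\CC G$-module affording $\chi$ by the scalar $\omega_\chi(C_i) = |C_i|\chi(\si_i)/\chi(1)$, so $\widehat{C_1}\cdots\widehat{C_r}$ acts by $\prod_{i=1}^r |C_i|\chi(\si_i)/\chi(1)$. Expanding this central element in the basis of primitive central idempotents $e_\chi = \tfrac{\chi(1)}{|G|}\sum_{h\in G}\overline{\chi(h)}\,h$ and reading off the coefficient of $g_0 = \si_{r+1}^{-1}$, using $\overline{\chi(\si_{r+1}^{-1})} = \chi(\si_{r+1})$, gives
\[
a = \frac{\prod_{i=1}^r |C_i|}{|G|}\sum_{\chi\in\Irr(G)}\frac{\prod_{i=1}^{r+1}\chi(\si_i)}{\chi(1)^{r-1}}.
\]
(Alternatively one may avoid the idempotents and appeal directly to the second orthogonality relation for characters.)

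Finally I would assemble the pieces: multiplying by $|C_{r+1}|$ and substituting $|C_i| = |G|/|C_G(\si_i)|$ yields
\[
|\Sigma(\C)| = \frac{|G|^{r}}{\prod_{i=1}^{r+1}|C_G(\si_i)|}\sum_{\chi\in\Irr(G)}\frac{\prod_{i=1}^{r+1}\chi(\si_i)}{\chi(1)^{r-1}},
\]
and dividing by $|Inn(G)| = |G|/|Z(G)|$ produces the asserted formula for $n(\C)$. The computation is routine; the only points that need care are the bookkeeping with inverses — that the target class must be $C_{r+1}^{-1}$, and the single application of $\chi(g^{-1}) = \overline{\chi(g)}$ at the coefficient-extraction step — so there is no genuine obstacle, in accordance with the reference \cite[Chap.~I, Thm.~5.8]{MalleMatzat}.
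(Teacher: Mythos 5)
Your proof is correct: the reduction to a single class-algebra coefficient, the action of the class sums via the central characters $\omega_\chi(C_i)=|C_i|\chi(\si_i)/\chi(1)$, the extraction of the coefficient of $\si_{r+1}^{-1}$ from the idempotent expansion (with the single use of $\chi(g^{-1})=\overline{\chi(g)}$), and the final normalization by $|{\rm Inn}(G)|=|G|/|Z(G)|$ all check out, including the exponent bookkeeping that produces $\chi(1)^{r-1}$. The paper does not prove the proposition but cites it as the classical Frobenius formula from \cite[Chap.~I, Thm.~5.8]{MalleMatzat}, and your argument is essentially that standard proof, so there is nothing to add.
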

 In order to find an upper bound for the number of local systems
 with the same tuple of local monodromies we use reduction modulo $l$
 and derive the bound from the normalized structure constant:\\
 
Let $G$ be a reductive  algebraic group defined over $\ZZ$ which is 
an irreducible subgroup of $\GL_n$ (e.g. $G_2\leq \GL_7$) and let 
  $\C=(C_1,\ldots,C_{r+1})$  be a  tuple of conjugacy classes 
  in $G.$ Consider the map
$$\pi: C_1\times \cdots\times  C_{r+1}\to G, (g_1,\ldots,g_{r+1})\mapsto g_1\cdots g_{r+1}$$ 
and let $X:=\pi^{-1}(1)$ (with $1\in G$ the neutral element).  
The variety $X$ decomposes into irreducible components 
$X_1,\ldots,X_k.$ 
The following result is the content of  \cite{Katz96}, Lemma 5.9.3, and will be useful below:

\begin{lem} \label{5.9.3}Let $R$ be a subring of $\CC$ which is finitely generated as a $\ZZ$-algebra. Then there exists 
an $N\in \NN_{>0}$  such that for any prime number $\ell$ which does not divide 
$N,$  there exists a finite extension $K_\nu$ of $\QQ_\ell$ with valuation ring 
$O_\nu$ and an isomorphism $\iota:\CC\to \bar{\QQ}_\ell$ under which 
$R$ is mapped into $O_\nu.$ 
\end{lem}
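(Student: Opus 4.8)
The plan is first to produce, for all but finitely many primes $\ell$, an injective ring homomorphism $R\to O_\nu$ into the valuation ring of a suitable finite extension $K_\nu/\QQ_\ell$, and then to upgrade this to an isomorphism $\CC\iso\bar{\QQ}_\ell$ by a transcendence basis argument. I would begin by passing to $L:=\mathrm{Frac}(R)$, a finitely generated field extension of $\QQ$ of transcendence degree $d$; I would fix a transcendence basis $t_1,\dots,t_d\in R$ and, using $\chara=0$, a primitive element $\theta\in R$ for the finite separable extension $L/\QQ(t_1,\dots,t_d)$, with minimal polynomial $g(X)=X^n+g_{n-1}X^{n-1}+\dots+g_0\in\QQ(t_1,\dots,t_d)[X]$. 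Choosing a finite set $r_1,\dots,r_k$ of $\ZZ$-algebra generators of $R$ and expanding the $r_s$ and the $g_i$ on the $\QQ(t_1,\dots,t_d)$-basis $1,\theta,\dots,\theta^{n-1}$ of $L$, I would select one nonzero $M\in\ZZ[t_1,\dots,t_d]$ clearing every denominator that appears; then $g\in\ZZ[t_1,\dots,t_d][1/M][X]$ is monic and $R\subseteq\ZZ[t_1,\dots,t_d][1/M][\theta]\cong\ZZ[t_1,\dots,t_d][1/M][X]/(g)$, with each $r_s$ an $\ZZ[t_1,\dots,t_d][1/M]$-linear combination of $1,\theta,\dots,\theta^{n-1}$. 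I would then take $N$ to be the content of $M$ (a nonzero integer, with $N=\pm M$ in the number field case $d=0$).

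Now fix $\ell\nmid N$; I would build the homomorphism in three moves. (i) As $M\bmod\ell\neq 0$ in $\FF_\ell[t_1,\dots,t_d]$, choose $a\in\bar{\FF}_\ell^d$ with $(M\bmod\ell)(a)\neq 0$, let $K_0/\QQ_\ell$ be the unramified extension with residue field $\FF_\ell(a)$ and ring of integers $O_0$, and lift $a$ to a tuple $\tau_0\in O_0^d$, so that $M(\tau_0)\in O_0^\times$. (ii) Since $O_0$ has cardinality $2^{\aleph_0}$ while only countably many of its elements are algebraic over $\QQ$, I would replace the coordinates of $\tau_0$ one at a time by congruent elements of $\tau_0+\ell O_0$ to obtain $\tau\in O_0^d$ with $\tau_1,\dots,\tau_d$ algebraically independent over $\QQ$ and still $M(\tau)\in O_0^\times$; the substitution $t_i\mapsto\tau_i$ then defines an embedding $\phi\colon\QQ(t_1,\dots,t_d)\inj K_0$ under which $g$ goes to a monic polynomial $g'\in O_0[X]$. (iii) Picking a root $\theta'\in\bar{\QQ}_\ell$ of $g'$, which is integral over $O_0$ and hence lies in $O_\nu$ for $K_\nu:=K_0(\theta')$, the assignments $t_i\mapsto\tau_i$ and $\theta\mapsto\theta'$ give a ring homomorphism $\lambda\colon L=\QQ(t_1,\dots,t_d)[X]/(g)\to\bar{\QQ}_\ell$, well defined since $g'(\theta')=0$; as $L$ is a field $\lambda$ is injective, and since the $1,\theta,\dots,\theta^{n-1}$-coordinates of each $r_s$ have image in $O_0$, one obtains $\lambda(r_s)\in O_0[\theta']\subseteq O_\nu$ and hence $\lambda(R)\subseteq O_\nu$.

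It then remains to extend $\lambda$ to all of $\CC$. Both $\CC$ and $\bar{\QQ}_\ell$ are algebraically closed of characteristic $0$ and cardinality $2^{\aleph_0}$, while $\lambda(L)$ is countable; thus $\CC$ is the algebraic closure of $L$ together with a transcendence basis of cardinality $2^{\aleph_0}$, and likewise $\bar{\QQ}_\ell$ over $\lambda(L)$. Fixing a bijection between these two transcendence bases extends $\lambda$ to an isomorphism of purely transcendental extensions, and then to an isomorphism $\iota\colon\CC\iso\bar{\QQ}_\ell$ of the ambient algebraically closed fields with $\iota|_R=\lambda$, so that $\iota(R)\subseteq O_\nu$. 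The step I expect to be the only genuine obstacle is move (ii): one has to arrange simultaneously that $M(\tau)$ remains a unit, so that the specialized polynomial $g'$ stays monic with integral coefficients (which is exactly what places $\theta'$ in $O_\nu$), and that $\tau_1,\dots,\tau_d$ are algebraically independent over $\QQ$ (which is exactly what forces $\lambda$ to be injective, thereby recovering the full ring $R$ rather than a reduction of it). Both are possible only because $\ell$-adic fields have transcendence degree $2^{\aleph_0}$ over $\QQ$; apart from this the argument is routine bookkeeping, and for $d=0$ it degenerates to the classical statement that a number field embeds into $\bar{\QQ}_\ell$ with its ring of integers landing in $O_\nu$ for almost all $\ell$.
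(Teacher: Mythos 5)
Your argument is correct and proves the lemma, but it is organized differently from the paper's. The paper (quoting Katz, Lemma 5.9.3) runs through Noether normalization over $\ZZ$: after inverting a single integer $N$, the ring $R$ is an \emph{integral} extension of a polynomial ring $\ZZ[\tfrac{1}{N}][x_1,\dots,x_m]$; then for $\ell\nmid N$ one sends the $x_i$ to any algebraically independent elements $y_i\in\ZZ_\ell$, extends to an isomorphism $\CC\to\bar{\QQ}_\ell$ by the axiom of choice, and integrality over $\ZZ[\tfrac{1}{N}][y_1,\dots,y_m]\subseteq\ZZ_\ell$ together with finite generation forces $\iota(R)$ into the valuation ring of a finite extension. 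You avoid Noether normalization altogether, replacing it by a transcendence basis of $\mathrm{Frac}(R)$ chosen in $R$, the primitive element theorem, and clearing denominators by one polynomial $M\in\ZZ[t_1,\dots,t_d]$, so that $R\subseteq\ZZ[t_1,\dots,t_d][1/M][\theta]$ with $g$ monic over $\ZZ[t_1,\dots,t_d][1/M]$. The price of this decomposition is the extra unit condition on $M(\tau)$, which is exactly why you need the reduction modulo $\ell$, the point $a\in\bar{\FF}_\ell^{\,d}$ and the unramified extension $K_0$; in the Noether-normalization route no such condition arises and the images of the variables can be taken in $\ZZ_\ell$ itself. What your route buys is that only elementary field theory is used in place of normalization over a non-field base, and the image of each generator of $R$ is located explicitly in $O_0[\theta']\subseteq O_\nu$; both proofs share the two essential points, namely that the $\ell$-adic integers have transcendence degree $2^{\aleph_0}$ over $\QQ$ and that an embedding of the countable field $L$ extends, via transcendence bases and the axiom of choice, to an isomorphism $\CC\iso\bar{\QQ}_\ell$. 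One wording fix in your step (ii): to make the whole tuple algebraically independent you must, at the $i$-th replacement, avoid the (still countable) set of elements of $O_0$ that are algebraic over $\QQ(\tau_1,\dots,\tau_{i-1})$, not merely those algebraic over $\QQ$; otherwise the coordinates could be separately transcendental yet algebraically dependent.
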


The idea of the proof is as follows: Using Noether normalization, $R$ is an integral 
extension of $\ZZ[\frac{1}{N}][x_1,\ldots,x_{r+1}],$ where $x_1,\ldots,x_{r+1}$ are algebraically
independent. By the axiom of choice, for any algebraically independent set $\{y_1,\ldots,y_{r+1}\}\subseteq \ZZ_\ell$ (where
$\ell$ does not divide $N$), there exists an isomorphism $\iota: \CC\to \bar{\QQ}_\ell$ which
maps $x_i$ to~$y_i, \, i=1,\ldots,{r+1}.$

 Lemma~\ref{5.9.3} implies that
  for any tuple $\C$ of conjugacy classes there exists an $M \in \NN_{>0}$
 such that for any prime number $\ell$ which does not divide 
$M,$  there exists a finite extension $K_\nu$ of $\QQ_\ell$ with valuation ring 
$O_\nu$ such that $C_1\times \cdots \times C_{r+1}$ and  $X=\pi^{-1}(1)$ is defined over $O_\nu.$ 
 Similarly, for any 
 $\g=(g_1,\ldots,g_{r+1})\in X$ there exists an $N\in \NN_{>0}$
 such that for any prime number $\ell$ which does not divide 
$N,$  there exists a finite extension $K_\nu$ of $\QQ_\ell$ with valuation ring 
$O_\nu$ such that the coefficients of all elements of $\g$ are contained in $O_\nu.$ 
  Hence, for almost all $\ell$  we find $\nu\mid \ell$ such we can reduce the entries of $\g$  
   modulo the valuation ideal $m_\nu\subseteq \OO_\nu.$ In this way we obtain 
   the {\it reduced tuple} $\bar{\g}=(\bar{g}_1,\ldots,\bar{g}_{r+1})\in (\bar{C}_1,\ldots, \bar{C}_{r+1}),$ where 
   $\bar{C}_i$ is the conjugacy class of $\bar{g}_i$ in $G(\FF_q),$ where
   $\FF_q=O_\nu/m_\nu.$  
  For positive natural numbers $k,$ let 
  ${\C}(q^k)$
%:=(\bar{C}_1^k,\ldots, \bar{C}_{r+1}^k)$ 
denote the tuple of
  conjugacy classes
 of $\bar{g}_1,\ldots, \bar{g}_{r+1}$   in the group $G(\FF_{q^k}).$ 

\begin{thm}\label{nC}
  Suppose that $G$ is an irreducible  simple algebraic subgroup of $\GL_n(\CC)$ defined over 
$\ZZ$ and suppose that there exists an $s\in \NN{>0}$ such that 
$$ \sup_{q}(\lim_k \lfloor n(\C(q^k)) \rfloor)=s,$$ where the supremum is taken over all 
prime powers $q$ which are cardinalities of  the residue fields
of  $\nu$ as above.
 Then, up to diagonal $G(\CC)$-conjugation,  
there exist at most $s$ tuples 
$$\g_i:=(g_{i,1},\ldots,g_{i,r+1})\in C_1\times \cdots \times C_{r+1} \quad (i=1,\ldots, s)$$ 
with $g_{i,1}\cdots g_{i,r+1}=1$ and such that the generated subgroup  
$\langle g_{i1},\ldots, g_{i,r+1}\rangle $ is irreducible. \end{thm}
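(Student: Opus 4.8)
The plan is to argue by contradiction, via reduction modulo a carefully chosen prime. Assume there were $s+1$ tuples $\g_i=(g_{i,1},\dots,g_{i,r+1})\in C_1\times\cdots\times C_{r+1}$ ($i=1,\dots,s+1$), each with $g_{i,1}\cdots g_{i,r+1}=1$ and generating an irreducible subgroup of $\GL_n(\CC)$, pairwise inequivalent under diagonal $G(\CC)$-conjugation. I would choose $\ell$ and $\nu\mid\ell$ as in Lemma~\ref{5.9.3} so carefully that all $s+1$ reductions $\bar\g_i\in G(\FF_q)^{r+1}$ remain \emph{absolutely} irreducible, remain pairwise non-conjugate even over $\bar\FF_q$, and have, for each $j$, a common conjugacy class; the finite-group count then forces $n(\C(q^k))\ge s+1$ for every $k$, contradicting $\sup_q\lim_k\lfloor n(\C(q^k))\rfloor=s$.

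To set this up I would first enlarge $R$ so that it contains the matrix entries of all $g_{i,j}$ and of chosen elements of $G(\CC)$ conjugating $g_{1,j}$ to $g_{i,j}$ (such elements exist since all $g_{i,j}$ lie in $C_j$). Two families of nonzero elements of $R$ encode what must survive reduction. By Burnside's theorem, irreducibility of the $i$-th group means its group algebra $\CC$-spans $M_n(\CC)$; picking $n^2$ words in the $g_{i,j}^{\pm1}$ with linearly independent matrices, let $\delta_i\in R\setminus\{0\}$ be the associated $n^2\times n^2$ determinant. For $i\ne i'$ the closed subvariety $V_{i,i'}=\{h\in G:h\g_ih^{-1}=\g_{i'}\}$, defined over $R$, is empty over $\CC$, hence (emptiness of a finite-type scheme being unchanged by field extension) empty over the fraction field of $R$, so some $m_{i,i'}\in R\setminus\{0\}$ lies in the ideal generated over $R[G]$ by the equations cutting out $V_{i,i'}$. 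With $D=\prod_i\delta_i\cdot\prod_{i<i'}m_{i,i'}$, applying Lemma~\ref{5.9.3} to $R[1/D]$ yields $\ell$, a finite extension $K_\nu/\QQ_\ell$ with valuation ring $O_\nu$ and residue field $\FF_q$, and an embedding $R[1/D]\hookrightarrow O_\nu$, whence $\bar D\in\FF_q^\times$ and all $\bar\delta_i$, $\bar m_{i,i'}$ are nonzero. Then each $\bar\g_i$ is absolutely irreducible (the chosen words still span $M_n(\bar\FF_q)$ as $\bar\delta_i\ne0$), the $\bar\g_i$ are pairwise non-conjugate in $G(\bar\FF_q)$ (the reduced equations of $V_{i,i'}$ still generate the unit ideal as $\bar m_{i,i'}\ne0$), reducing the conjugators puts all $\bar g_{i,j}$ with fixed $j$ in one $G(\FF_q)$-class, and $\bar g_{i,1}\cdots\bar g_{i,r+1}=1$. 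Taking $\g_1$ as the distinguished tuple defining $\C(q^k)$, we get that for every $k\ge1$ the tuples $\bar\g_1,\dots,\bar\g_{s+1}$ all lie in $\Sigma(\C(q^k))$, are absolutely irreducible, and lie in $s+1$ distinct $G(\FF_{q^k})$-conjugacy orbits.

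Finally I would compare with $n(\C(q^k))$: for an absolutely irreducible $\bar\g$, Schur's lemma shows the centralizer of $\langle\bar g_1,\dots,\bar g_{r+1}\rangle$ in $\GL_n(\FF_{q^k})$ consists of scalars, so the stabilizer of $\bar\g$ under $G(\FF_{q^k})$-conjugation is $G(\FF_{q^k})\cap\FF_{q^k}^\times=Z(G)(\FF_{q^k})$ (as $Z(G)$ already acts by scalars on $\CC^n$); hence its orbit has $|G(\FF_{q^k})|/|Z(G)(\FF_{q^k})|\ge|Inn(G(\FF_{q^k}))|$ elements and contributes at least $1$ to $n(\C(q^k))=|\Sigma(\C(q^k))|/|Inn(G(\FF_{q^k}))|$. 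Summing over the $s+1$ distinct orbits gives $n(\C(q^k))\ge s+1$ for all $k$, so $\sup_q\lim_k\lfloor n(\C(q^k))\rfloor\ge s+1$, the desired contradiction. The hard part is the middle paragraph — making reduction modulo $\ell$ preserve absolute irreducibility and pairwise non-conjugacy for all $s+1$ tuples at once — which is precisely why Burnside's theorem is used to convert irreducibility into the non-vanishing of the $\delta_i$, the $m_{i,i'}$ certify non-conjugacy, and Lemma~\ref{5.9.3} is applied to $R[1/D]$ rather than to $R$; everything else is orbit-counting and Schur's lemma.
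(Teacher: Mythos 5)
Your argument is correct, but it implements the key step differently from the paper. The paper works with the variety $X=\pi^{-1}(1)\subseteq C_1\times\cdots\times C_{r+1}$ and splits into two cases: tuples lying in different irreducible components of $X$ (whose reductions are asserted to stay different), and tuples in one component, where a dimension count (orbits have dimension $\dim G$ because irreducibility forces finite stabilizers) produces infinitely many $G(\CC)$-orbits, $u>s$ of which are kept distinct modulo $\nu$ by choosing functions $f_j$ in their vanishing ideals and, if necessary, adjusting the embedding $\iota:\CC\to\bar\QQ_\ell$. You instead certify, for the given $s+1$ tuples directly, that everything needed survives reduction: Burnside determinants $\delta_i$ guarantee the reduced tuples stay absolutely irreducible, emptiness of the transporter varieties $V_{i,i'}$ plus the Nullstellensatz (the elements $m_{i,i'}$) guarantees they stay pairwise non-conjugate even over $\bar\FF_q$, reduced conjugators keep all components in the fixed classes, and Lemma~\ref{5.9.3} applied to $R[1/D]$ makes all certificates units simultaneously; Schur's lemma then gives each of the $s+1$ orbits a contribution of at least $1$ to $n(\C(q^k))$. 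This buys two things the paper leaves implicit: it avoids the component/orbit dichotomy altogether, and it supplies the step the paper's count actually needs, namely that the reduced tuples have centralizer no larger than the centre (otherwise $t$ distinct $G(\FF_{q^k})$-classes need not force $n(\C(q^k))\geq t$, since an orbit's contribution is $|Z(G(\FF_{q^k}))|/|\mathrm{Stab}|\leq 1$). The paper's route, in exchange, exhibits the stronger mechanism in its Case~2 that a positive-dimensional family of inequivalent irreducible tuples makes the structure constants exceed every bound. Two small points to make your version airtight: since $G$ is simple it has no nontrivial characters, so $G\subseteq\SL_n$ and the reduced conjugators automatically land in $G(\FF_q)$ (no issue with inverting their determinants), and the entries of the $g_{i,j}^{-1}$ used in your Burnside words are polynomial in those of $g_{i,j}$ for the same reason, so enlarging $R$ as you do suffices.
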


\begin{proof} 
Assume that there exist  $t>s$ different equivalence 
classes (w.r. to diagonal $G(\CC)$-conjugation) of  tuples 
$$\g_i=(g_{i,1},\ldots,g_{i,r+1})\in C_1\times \cdots \times C_{r+1} \quad (i=1,\ldots, t)$$ 
with $g_{i,1}\cdots g_{i,r+1}=1$ and such that the generated subgroup  
$\langle  g_{i1},\ldots, g_{i,r+1} \rangle $ is irreducible. 
We have the following two cases:

{\it Case1:} The tuples $\g_i=(g_{i,1},\ldots,g_{i,r+1})\, (i=1,\ldots,t)$ lie in $t$ different irreducible components $X_i$ of $X.$ 
By Lemma~\ref{5.9.3}, for almost all $\ell$ there exists a finite extension $K_\nu$ of $\QQ_\ell$  such that 
$\g_i\in X_i(O_\nu)\, (i=1,\ldots,t).$ If $\ell>>0$ and $k>>0,$  then the reductions modulo 
$m_\nu$ of the components $X_i$ remain different. Hence  reduction modulo the 
maximal ideal $m_\nu$ of $O_\nu$   leads to $t$ 
different equivalence classes (under diagonal conjugation with elements in $G(\FF_q)$)
$\bar{\g}_i\in (\bar{C}_1,\ldots, \bar{C}_{r+1}),$ contrary to $t>s=\sup_{q}(\lim_k \lfloor n(\C(q^k))\rfloor ) .$  

{\it Case 2:} Two of the tuples, say $\g_1$ and $\g_2$,
lie in the same irreducible component $X_1.$ 
Since  $\langle \g_1 \rangle $ is irreducible  the  $G(\CC)$-stabilizer of $\g_1
\in C_1\times \cdots \times C_{r+1}$ 
under diagonal conjugation is equal to the centralizer of 
$\langle \g_1 \rangle$ and hence coincides with the (finite) centre  $Z(G)$ of $G.$  
  This implies that  the dimension of the component $X_1$ of $X$ with $\g_1 \in X_1$ is $\geq \dim G.$ 
  Therefore, by the assumption in Case~2, 
$\dim X_1> \dim G$ and, by dimension reasons, there exist infinitely many $G(\CC)$-orbits $V_{j}\, (j\in J)$ 
in $X_1.$ Pick $u>s$ different orbits $V_{1},\ldots,V_{u}$ and representatives 
$$v_k \in V_{k}\quad (k=1,\ldots,u).$$ 
Suppose that the $V_{1},\ldots,V_{u}$ are defined over $R,$ where we see
$R$ as a subring of $O_\nu$ ($\nu |\ell$) as above.
We claim that for $\ell>>0,$ the reductions modulo-$\nu$ are different. This can be seen
inductively as follows: The orbits are (quasi-)affine varieties inside an ambient
affine space $\AA^s.$ Pick functions $f_j$ in the vanishing ideals 
of $V_{j}$ with the property that for 
$j\not= j',$ there exists $v_{j'}\in V_{j'}$ with $f_j(v_{j'})\not= 0.$ Extending scalars and 
assuming $\ell$ large enough, we
can assume   that the functions $f_j$ and the $v_{j}$ are defined over $R$ and hence over
$O_\nu.$ If $f_j(v_{j'})$ is algebraic, then for almost all $\ell$ the inequality 
$f_j(v_{j'})\not= 0$ will hold modulo $\nu$ for all pairs of 
$j,j'$ where $j\not=j'.$   If $f_j(v_{j'})$ is transcendent, then with the 
freedom to choose  the isomorphism $\iota:\CC\to \bar{\QQ}_\ell$ (see the remark
following Lemma~\ref{5.9.3}) in a way that
the inequality 
$f_j(v_{j'})\not= 0$ will hold modulo $\nu$  for all pairs of 
$j,j'$ where $j\not=j'.$  Therefore, the orbits 
remain different modulo $\nu$ and hence 
$$ \sup_{q}(\lim_k \lfloor n(\C(q^k))\rfloor )\geq u,$$  a contradiction to $u>s.$
\end{proof}

\begin{rem}\label{remchevie} Recall that there are character tables for $G(\FF_q)$ which compute the character
values of the elements of $G(\FF_q)$ as functions depending on $q,$ the so called
{\it generic character tables}. For groups with small Lie rank, these generic character 
tables are implemented in \cite{CHEVIE}, especially, the case $G=G_2$, cf. \cite{ChangRee} and \cite[Anhang B]{Hiss},   can be found there. 
Using the generic character table of $G_2(q)$ we can determine $n(\bar{\C}(q^k))$ 
and also  $ \sup_q(\lim_k \lfloor n(\C(q^k))\rfloor )$ in many cases.  
 \end{rem}

\begin{rem}\label{ChaRee}
We give an overview of  the class representatives $c_j$ in $G_2(q)$ taken from
 Chang and Ree. 
In order to determine $\lim_k \lfloor n(\C(q^k))\rfloor$ we can assume  that the eigenvalues of all class representatives of
$C_1,\ldots, C_{r+1}$
are in $\FF_q$.
Otherwise we replace $q$ by $q^4$.
The generic character table depends on the congruence of $q \mod 12$.
Thus we can also assume that $q \equiv 1 \mod 12$.
We list in the
notation of Chang and Ree \cite{ChangRee}
for class representative $c_j$ having eigenvalues in $\FF_q$ the corresponding
Jordan forms.
 The order of the centralizer of $c_j$ in  $G \in \{G_2, O_7 ,\GL_7\}$  is a polynomial in $q$ of degree 
 $d_G:=\dim C_{G(\overline{\FF_q})}(g_j)$.

{\small $$ \begin{array}{c|c|ccc|ccc}
\mbox{class rep.} & \mbox{Jordan form} & d_{G_2} &  d_{O_7} &  d_{\GL_7}  & \mbox{conditions}\\
%&&&&&&& \\
\hline
%&&&&&&& \\
1 & 1& 14 & 21& 49&\\
 u_1 & (J(2),J(2),1,1,1)& 8& 13 &29&\\
 u_2 & (J(3),J(2),J(2))&6&9&19&\\
u_3 &(J(3),J(3),J(1))& 4&7&17&\\
u_4 &(J(3),J(3),J(1))&4&7&17&\\
u_5 &(J(3),J(3),J(1))&4&7&17&\\
u_6 & J(7)&2 &3 &7&\\
\hline
%&&&&&&& \\
k_2 & (-1_4,1_3)& 6&9&25 & &\\
k_{2,1}&(-J(2),-J(2),1,1,1)&4&7&17\\
k_{2,2}&(-J(2),-J(2),J(3))&4&5&11\\
k_{2,3}&(-J(3),-J(1),J(3))&2&3&9\\
k_{2,4}&(-J(3),-J(1),J(3))&2&3&9\\
\hline
k_3 & (\omega 1_3,1,\omega^{-1}1_3 )& 8 & 9 & 19 &\omega^3=1\\
k_{3,1}&(\omega J(2),\omega^{-1}J(2),\omega,\omega^{-1},1)&4&5&11&\\
k_{3,2}&(\omega J(3),\omega^{-1}J(3),1 )&2&3&7&\\
k_{3,3,i}&(\omega J(3),\omega^{-1}J(3),1 )&2&3&7&\;i=1,2,\;k_{3,3,1}^{-1} \sim k_{3,3,2} \\
\hline 
h_{1a}&(x,x,x^{-1},x^{-1},1,1,1)&4 &7&17&x^{q-1}=1,\,x^2 \neq 1\\
h_{1a,1}&(xJ(2),x^{-1}J(2),J(3))&2 &3&7\\
h_{1b}&(x,x,x^2,1,x^{-1},x^{-1},x^{-2})&4 &5&11&x^{q-1}=1,\;x^3\neq 1,x^4 \neq 1 \\
&(i,i,-1,1,-1,i^{-1},i^{-1}) &4 &5&13& \\
\hline 
%\end{array}$$
%$$ \begin{array}{c|c|ccc|ccc}
%\mbox{class rep.} & \mbox{Jordan form} & d_{G_2} &  d_{O_7} &  d_{\GL_7}  & \mbox{conditions} \\
%&&&&&&& \\
%\hline
%&&&&&&& \\
% &&&&&&& \\&(i,i,-1,1,-1,i^{-1},i^{-1}) &4 &5&13& \\
h_{1b,1}&(xJ(2),x^{-1}J(2),x^2,x^{-2},1)&2&3&7 \\
&(iJ(2),i^{-1}J(2),-1,-1,1)&2&3&9 \\
%&&&&&&& \\

% &&&&&&& \\
% h_{2a} &(x,x,x^2,x^{-2},x^{-1},x^{-1},1)&4 &5&11&x^{q+1}=1,\,x^2 \neq 1,x^3\neq 1,x^4 \neq 1  \\
% &(i,i,-1,1,i^{-1},i^{-1},-1) &4 &5&13& \\
% h_{2a,1}& (xJ(2),x^{-1}J(2),x^2,x^{-2},1)&2&3&7 \\
% &(iJ(2),i^{-1}J(2),-1,-1,1)&2&3&9 \\
% h_{2b} &(x,x,x^{-1},x^{-1},1,1,1)&4 &7&17&x^{q+1}=1,\,x^2 \neq 1\\
% h_{2b,1}& (xJ(2),x^{-1}J(2),J(3))&2&3&7 \\
% \hline 
%&&&&&&& \\
h_1 &(x,y,xy,1,(xy)^{-1},y^{-1},x^{-1})&2&3&7& x^{q-1}=y^{q-1}=1 \\
%&&&&&  x,y,xy,1, (xy)^{-1},y^{-1},x^{-1},-1\\
&& &&&\textrm{pairw. diff. eigenvalues }\\                
    &(x,-1,-x,1,-x^{-1},-1,x^{-1})&2&3&9 &&\\
% h_2 &(x,y,xy,1,(xy)^{-1},y^{-1},x^{-1})&2&3&7& x^{q+1}=y^{q+1}=1,\\
% &&&&&x,y,xy^{\pm 1},x (xy)^{\pm 1} \neq 1,x,y,xy \neq -1 \\
%     &(x,-1,-x,1,-x^{-1},-1,x^{-1})&2&3&9 &&\\
% h_a&&2&\\
% h_b&&2&\\
% h_3&&2&\\
% h_6&&2&\\
\end{array}
$$}
\end{rem}

\end{document}